\theoremstyle{plain}
\newtheorem{theorem}{Theorem}[section]
\theoremstyle{definition}
\newtheorem{definition}{Definition}
\title{Advances in the characterization of curvature of probability manifolds generated by two-dimensional families of distributions.
	%%%% Cite as
	%%%% Update your official citation here when published
	%\thanks{\textit{\underline{Citation}}:
		%\textbf{Authors. Title. Pages.... DOI:000000/11111.}}
}
\author{
	Giuseppe Giacopelli \\
	CNR IRIB, Palermo, Italy \\
	\texttt{giuseppeg94@gmail.com} \\
	%% examples of more authors
	\And
	Andrea De Gaetano \\
	CNR IRIB, Palermo, Italy \\
	CNR IASI, Rome, Italy \\
	Óbuda University, Budapest, Hungary \\
	\texttt{andrea.degaetano@cnr.it} \\
	%% \AND
	%% Coauthor \\
	%% Affiliation \\
	%% Address \\
	%% \texttt{email} \\
	%% \And
	%% Coauthor \\
	%% Affiliation \\
	%% Address \\
	%% \texttt{email} \\
	%% \And
	%% Coauthor \\
	%% Affiliation \\
	%% Address \\
	%% \texttt{email} \\
}
\begin{document}
	\maketitle

	\begin{abstract}
		In this work some advances in the theory of curvature of two-dimensional probability manifolds corresponding to families of distributions are proposed. It is proved that location-scale distributions are hyperbolic in the Information Geometry sense even when the generatrix is non-even or non-smooth. A novel formula is obtained for the computation of curvature in the case of exponential families: this formula  implies some new flatness criteria in dimension 2. Finally, it is observed that many two parameter distributions, widely used in applications, are locally hyperbolic, which highlights the role of hyperbolic geometry in the study of commonly employed probability manifolds. These results have benefited from the use of explainable computational tools, which can substantially boost scientific productivity.
	\end{abstract}

	\maketitle

	%------
	% INSERT THE BODY OF THE PAPER HERE (except
	% acknowledgments, funding info and bibliography)
	%------

	\section{Introduction}
	\subsection{State of the Art}
	Over the past few years the theme of ``Explainable" AI (henceforth XAI) has become a pervasive theme in AI discussions, both in Europe (with the AI Act) and also, more recently, in the US. What XAI actually represents is currently debated: in the present work we use the operational definition that an XAI is an Artificial Intelligence algorithm whose results can be easily checked by humans (hence explained by them). In the following we will use two XAI systems to help us establish some results in the theory of probability manifolds. The tools used are a CAS (Computer Algebra System) implemented in sympy, which performs symbolic calculus, and an automatic integrator (Wolfram Alpha), which is able to progressively solve even very complicated integrals, by making recourse to a set of established integration rules. Helped by  this technology, in the present work we will propose three advances in the theory of probability manifolds in dimension 2. The first one is an extension of the results obtained in \cite{Nielsen2021} for non-even, non-smooth and arbitrary-support generatrix function $p(x)$. The second is a novel formula for the computation of curvature in the case of exponential families \cite{Amari2016}, formula which implies new  criteria for flatness in dimension 2. The third, minor, advance will be the restatement of the theorem by Le Brigant and Puechmorel \cite{Brigant2019} in a more compact way. \\
	The results obtained suggest that most experimentally employed probability distributions with two parameters are in fact locally hyperbolic, making hyperbolic geometry the main tool in the study of commonly used probability manifolds.

	\subsection{Fisher Information Matrix and Probability family manifold}
	Given a parametric  family of distributions $p(x|\theta)$, with parameter $\theta= (\theta_1, ..., \theta_d)$, the Fisher Information matrix is defined as
	$$
	I_F(\theta) = \mathbb{E}_{p(x|\theta)} [ - \nabla\nabla \log p(x|\theta) ].
	$$

	The Fisher information matrix induces the metric tensor $g_{ij}$ as
	$$
	g_{ij}(\theta) = \mathbb{E}_{p(x|\theta)} [- \partial_i \partial_j \log p(x|\theta) ]
	$$

	in the Riemann manifold of the corresponding family of probability distributions. Our aim will be to study such Riemannian manifold by studying $g_{ij}$. \\

	Supposing that the vector $\theta \in \mathbb{R}^d$, we define first of all the Christoffel symbols (in Einstein notation) as
	$$\Gamma^i_{jk} = \frac{1}{2} g^{il} \Bigg( \frac{\partial g_{lj}}{\partial x^k}+\frac{\partial g_{lk}}{\partial x^j}-\frac{\partial g_{jk}}{\partial x^l}\Bigg), $$
	the derivative of Christoffel symbols as
	$$\Gamma^{i}_{jk,\lambda} = \frac{\partial \Gamma^i_{jk}}{\partial x^{\lambda}}, $$
	the Ricci curvature as the number (dependent on $\theta$)
	$$S_R = g^{\mu \nu} \Big( \Gamma^{\lambda}_{\mu \nu, \lambda} - \Gamma^{\lambda}_{\mu \lambda, \nu} + \Gamma^{\sigma}_{\mu \nu} \Gamma^{\lambda}_{\lambda \sigma} - \Gamma^{\sigma}_{\mu \lambda} \Gamma^{\lambda}_{\nu \sigma}\Big), $$
	and the scalar curvature (henceforth simply curvature)
	$$S = \frac{1}{d} S_R. $$

	The value of the curvature is of interest because it can be proved that
	\begin{itemize}
		\item If $S>0$ the geometry of the manifold is locally spherical
		\item If $S=0$ the geometry of the manifold is locally flat
		\item If $S<0$ the geometry of the manifold is locally hyperbolic
	\end{itemize}
	In the present work the dimension will be supposed to be $d=2$.

	\subsection{Example of curvature computation}
	To make an example, suppose the parametric distributions family of Gaussians
	$$p(x|\mu,\sigma^2) = \frac{1}{\sqrt{2 \pi \sigma^2}} e^{-\frac{(x-\mu)^2}{2 \sigma^2}} $$
	is endowed with the metric tensor
	$$g_{ij}(\mu,\sigma^2) = I_F(\mu,\sigma^2) = \begin{pmatrix}
		\frac{1}{\sigma^2} & 0 \\
		0 & \frac{1}{2 \sigma^4}
	\end{pmatrix}. $$
	In this case the curvature can be computed as
	$$S(\mu,\sigma^2) = - \frac{1}{2} < 0, $$
	which implies that the curvature is negative, i.e. that the geometry is locally hyperbolic.

	\section{Location-scale families}
	\subsection{Non-smooth location-scale families with arbitrary support}
	In the present section we will follow mainly Nielsen \cite{Nielsen2021}, with some generalizations. We consider a random variable $X$ with density function $p_X(x) = p(x)$ and cumulative distribution function

	$$
	F_X(\bar{x}) = P(X \leq \bar{x}) = \int_{-\infty}^{\bar{x}} p_X(x) dx \,\,,
	$$

	where the integral is the Lebesgue integral. In \cite{Nielsen2021}  $p_X(x)$  is required to be a smooth function, with $p_X(x)>0, \forall x \in \mathbb{R}$. In the present work the requirements will be less restrictive, but some definitions are needed:

	\begin{definition}
		Given a real-valued function $p(x)$, the \emph{positive support} of the function $p(x)$ is the set
		$$
		p(x)^+ = \big\{ x \in \mathbb{R} \mid  p(x)>0\big\} \,\,.
		$$
		Similarly, the \emph{negative support} of the function $p(x)$ is the set
		$$
		p(x)^- = \big\{ x \in \mathbb{R} \mid p(x)<0\big\}.
		$$
	\end{definition}
	If $p_X(x)$ a probability distribution then $p_X(x) \geq 0$ and we will henceforth ignore the negative support since $p_X(x)^- = \emptyset$. In Nielsen \cite{Nielsen2021} it is supposed that $p_X(x)^+ = (-\infty,+\infty) = \mathbb{R}$, here we relax this constraint by introducing the concept of \emph{arbitrary support function}:

	\begin{definition}
		An \emph{arbitrary support function} p(x) is a function for which
		$$p(x)^+ \subseteq \mathbb{R}$$
		in the sense that $p(x)^+$ could be a proper subset of $\mathbb{R}$.
	\end{definition}

	In conclusion
	\begin{definition}
		A real-valued function $p_X(x)$ is a \emph{non-smooth arbitrary support distribution} if
		\begin{enumerate}
			\item The function $p_X(x)$ is non- negative ($p_X(x) \geq 0$) and has arbitrary support.
			\item The support $p_X(x)^+$ is a measurable set with non-zero measure and $p_X(x)$ is Lebesgue-integrable in $p_X(x)^+$
			with integral equal to 1,
			$$\int_{p_X(x)^+} {p_X(x)} dx = \int_{-\infty}^{+\infty} {p_X(x)} dx= 1.$$
			\item Instead of smoothness we will require that the derivative $p_X'(x)$ exists and is finite almost everywhere in the Lebesgue measure.
		\end{enumerate}
	\end{definition}

	Following Nielsen \cite{Nielsen2021} we will prove some preliminary theorems.

	\begin{theorem}
		Given the random variable $X$ with cumulative distribution function $F_X(x)$ and probability density $p_X(x)$, consider the random variable $Y \sim s X + l$ (for some $s \in (0,+\infty)$ and $l \in \mathbb{R}$) with cumulative distribution function $F_Y(y)$ and probability density $p_Y(y)$. Then

		$$
		F_Y(\bar{y}) = F_X \bigg(\frac{\bar{y}-l}{s}\bigg), \forall \bar{y} \in \mathbb{R}
		$$
		and
		$$
		p_Y(y) = \frac{1}{s} p_X \bigg( \frac{y-l}{s}\bigg).
		$$
	\end{theorem}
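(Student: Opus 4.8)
The plan is to prove the two identities in sequence, starting from the definition of the cumulative distribution function and deducing the density relation from it. For the first identity I would begin with $F_Y(\bar{y}) = P(Y \leq \bar{y})$ and substitute $Y = sX + l$ to obtain $P(sX + l \leq \bar{y})$. Since $s \in (0,+\infty)$ is strictly positive, the affine map $x \mapsto sx + l$ is strictly increasing, so the event $\{sX + l \leq \bar{y}\}$ coincides with $\{X \leq (\bar{y}-l)/s\}$ without any reversal of the inequality. Hence $F_Y(\bar{y}) = P(X \leq (\bar{y}-l)/s) = F_X((\bar{y}-l)/s)$ by the definition of $F_X$, and this holds for every $\bar{y} \in \mathbb{R}$.

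For the density relation I would avoid differentiating $F_X$ directly, since in the non-smooth arbitrary-support setting $p_X = F_X'$ holds only almost everywhere and the chain rule must be applied with care. Instead I would verify that the proposed density integrates to the cumulative distribution function already established in the first step. Setting $q(y) = \tfrac{1}{s}\, p_X\!\left(\tfrac{y-l}{s}\right)$, I would compute $\int_{-\infty}^{\bar{y}} q(y)\, dy$ and perform the substitution $u = (y-l)/s$, $dy = s\, du$; because $s > 0$ the limits of integration transform monotonically, so the upper limit $y = \bar{y}$ becomes $u = (\bar{y}-l)/s$ and the factor $\tfrac{1}{s}$ cancels the Jacobian $s$. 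This yields $\int_{-\infty}^{(\bar{y}-l)/s} p_X(u)\, du = F_X((\bar{y}-l)/s) = F_Y(\bar{y})$, so $q$ is indeed a valid density for $Y$.

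The main obstacle I anticipate is justifying the change of variables as an identity of Lebesgue integrals under the relaxed hypotheses of the paper, where $p_X$ need not be smooth, is defined only up to a set of measure zero, and may have support strictly contained in $\mathbb{R}$. The map $u = (y-l)/s$ is an affine diffeomorphism of $\mathbb{R}$, so it preserves Lebesgue measurability and null sets and the substitution is valid for nonnegative measurable integrands; the strict positivity $s > 0$ is precisely what guarantees that orientation is preserved, so no sign issues arise and the support of $q$ is simply the image of $p_X(x)^+$ under $x \mapsto sx+l$. An alternative, more direct route would differentiate the first identity via the chain rule at each point of differentiability of $F_X$, obtaining $p_Y(y) = \tfrac{1}{s}\, p_X((y-l)/s)$ almost everywhere; this is lighter but requires invoking the Lebesgue differentiation theorem and checking that the almost-everywhere differentiability assumed in the definition of a non-smooth arbitrary support distribution is preserved under the affine reparametrization.
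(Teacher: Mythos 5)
Your proposal is correct and takes essentially the same route as the paper: both first derive $F_Y(\bar{y}) = F_X\big(\tfrac{\bar{y}-l}{s}\big)$ from the event identity $\{sX+l \leq \bar{y}\} = \{X \leq \tfrac{\bar{y}-l}{s}\}$ (valid since $s>0$), and both then obtain the density by showing via the affine change of variables that $\tfrac{1}{s}p_X\big(\tfrac{y-l}{s}\big)$ integrates to $F_Y(\bar{y})$, rather than by differentiating. Your additional remarks on measurability, null sets, and orientation preservation under the affine substitution make explicit what the paper leaves implicit, but the argument is the same.
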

	\begin{proof}
		It can be observed that by defining $Y \sim s X + l$, this random variable has cumulative distribution function
		$$
		F_Y(\bar{y}) = P(Y \leq \bar{y}) = P((sX+l) \leq \bar{y}) = P\bigg(X \leq \frac{\bar{y}-l}{s} \bigg) = F_X\bigg( \frac{\bar{y}-l}{s} \bigg).
		$$
		Then
		$$
		\int_{-\infty}^{\bar{y}} {p_Y(y)} dy = F_Y(\bar{y}) = F_X\bigg( \frac{\bar{y}-l}{s} \bigg) = \int_{-\infty}^{\frac{\bar{y}-l}{s}} {p_X(x)} dx = \int_{-\infty}^{\bar{y}} {\frac{1}{s} p_X\bigg(\frac{y-l}{s}\bigg)} dy\,\,,
		$$
		so that
		$$
		p_Y(y) = \frac{1}{s} p_X \bigg( \frac{y-l}{s}\bigg).
		$$
	\end{proof}

	Now, if we define the group $G$ of transformations
	$$
	g_{l,s} \cdot p(x) = \frac{1}{s} p \bigg(\frac{x-l}{s} \bigg),
	$$
	a family of probability distributions is induced as the orbit of the group $G$ acting on a generatrix probability distribution $p(x)$. We may thus define the corresponding location-scale family as follows:
	\begin{definition}
		Given a generatrix function $p(x)$, the \emph{location-scale family} generated from the generatrix $p(x)$ is defined as
		$$
		\mathcal{F}_p = G \cdot p(x) = \big\{ g \cdot p \mid g\in G\big\} = \bigg\{ g_{l,s} \cdot p(x) = \frac{1}{s} p \bigg(\frac{x-l}{s} \bigg) \mid s,l \in \mathbb{R}, s>0 \bigg\}
		$$
	\end{definition}

	\begin{definition}
		When the generatrix $p(x)$ is a non-smooth arbitrary support distribution then the location scale family $\mathcal{F}_p $ is a \emph{non-smooth location-scale family with arbitrary support}.
	\end{definition}

	In \cite{Nielsen2021} a characterization of this problem in the case of a smooth $p(x)$ with $p(x)^+ = (-\infty,+\infty)$ is proved. In the present work this characterization is generalized as follows:

	\begin{theorem}
		Given a non-smooth arbitrary support distribution $p(x)$ and the non-smooth location-scale family with arbitrary support $\mathcal{F}_p $, then the metric tensor of the probability family manifold is
		$$
		g_{ij} (l,s) = \frac{1}{s^2} \begin{pmatrix}
			a^2 & c \\
			c & b^2
		\end{pmatrix} \,\,,
		$$
		where
		$$
		a^2 = \mathbb{E}_{p} \Bigg[ \bigg(\frac{p'(x)}{p(x)} \bigg)^2 \Bigg] \geq 0 \,\,,
		$$

		$$
		b^2 = \mathbb{E}_{p}  \Bigg[ \bigg(1+x\frac{p'(x)}{p(x)} \bigg)^2 \Bigg] \geq 0 \,\,,
		$$

		$$
		c = \mathbb{E}_{p} \Bigg[ \frac{p'(x)}{p(x)} \bigg(1+x\frac{p'(x)}{p(x)} \bigg) \Bigg]\,\,.
		$$
	\end{theorem}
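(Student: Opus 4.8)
The plan is to compute the Fisher information metric directly from its definition, exploiting the explicit density $p_Y(y\mid l,s)=\frac{1}{s}p\!\left(\frac{y-l}{s}\right)$ furnished by the previous theorem, and to convert every expectation over the transformed variable $Y$ into an expectation over the generatrix $p$ by the affine change of variables $z=\frac{y-l}{s}$. Writing $\theta=(\theta_1,\theta_2)=(l,s)$, I would work with the score (first-derivative) form of the metric,
$$g_{ij}(l,s)=\mathbb{E}_{p_Y}\big[\partial_i\log p_Y\,\partial_j\log p_Y\big],$$
rather than the Hessian form, since the hypotheses on a non-smooth distribution only guarantee that $p'$ exists and is finite almost everywhere; the two forms agree under the regularity discussed below, and the score form never requires $p''$.

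First I would differentiate $\log p_Y=-\log s+\log p(z)$ with $z=\frac{y-l}{s}$. The chain rule gives $\partial_l z=-\frac{1}{s}$ and $\partial_s z=-\frac{z}{s}$, hence
$$\partial_l\log p_Y=-\frac{1}{s}\frac{p'(z)}{p(z)},\qquad \partial_s\log p_Y=-\frac{1}{s}\left(1+z\,\frac{p'(z)}{p(z)}\right).$$
The key structural observation is that a factor $\frac{1}{s}$ is extracted from each score, so that every entry of $g_{ij}$ carries the common prefactor $\frac{1}{s^2}$.

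Forming the three products $(\partial_l\log p_Y)^2$, $(\partial_s\log p_Y)^2$ and $(\partial_l\log p_Y)(\partial_s\log p_Y)$ and taking expectations, I would then apply the change of variables $z=\frac{y-l}{s}$. Since $Y$ has density $p_Y$, the variable $z$ has density exactly the generatrix $p$, because the Jacobian $\frac{1}{s}$ cancels against the factor $s$ coming from $dy=s\,dz$; concretely $\mathbb{E}_{p_Y}[f(z)]=\int f(z)\,p(z)\,dz=\mathbb{E}_p[f]$. This turns the three expectations into
$$g_{ll}=\frac{1}{s^2}\,\mathbb{E}_p\!\left[\Big(\tfrac{p'}{p}\Big)^2\right],\quad g_{ss}=\frac{1}{s^2}\,\mathbb{E}_p\!\left[\Big(1+x\tfrac{p'}{p}\Big)^2\right],\quad g_{ls}=\frac{1}{s^2}\,\mathbb{E}_p\!\left[\tfrac{p'}{p}\Big(1+x\tfrac{p'}{p}\Big)\right],$$
which are precisely $\frac{a^2}{s^2}$, $\frac{b^2}{s^2}$ and $\frac{c}{s^2}$; the inequalities $a^2\ge 0$ and $b^2\ge 0$ are immediate since they are expectations of squares.

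The main obstacle is purely analytic and lies entirely in the weak-regularity setting that distinguishes this statement from Nielsen's. I would need to check that the logarithmic derivative $\frac{p'(x)}{p(x)}$ is well defined and finite almost everywhere on the positive support $p(x)^+$ (guaranteed by the assumption that $p'$ exists finitely a.e. together with $p>0$ there), and that the complement of the support, where $p_Y=0$, contributes nothing to any expectation. The genuinely delicate point is justifying that the Hessian definition of $g_{ij}$ coincides with the score form used above: this requires differentiating under the integral sign and the vanishing of the boundary terms $\int \partial_i p_Y\,dy=\partial_i\int p_Y\,dy=0$, which can fail if $p$ does not decay or vanish at the endpoints of a bounded support. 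Rather than assume smoothness and full support, I would verify these interchange and boundary conditions directly from Lebesgue-integrability and the a.e.-differentiability hypotheses, so that the computation remains valid for non-smooth generatrices with arbitrary support.
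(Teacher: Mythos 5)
Your proposal is correct and follows essentially the same route as the paper's proof: both pass to the score (outer-product) form of the Fisher information, compute the parameter derivatives of $\log p_{\lambda}$ via the chain rule, extract the common factor $\frac{1}{s^2}$, and reduce each entry to an expectation over the generatrix by the change of variables $z = \frac{y-l}{s}$ restricted to the positive support. If anything, you are more explicit than the paper about the one delicate point, namely the equivalence of the Hessian and score forms of the metric under the weak regularity hypotheses, which the paper's proof invokes without justification.
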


	\begin{proof}
		By defining $\lambda= (l,s)$ and
		$$
		p_{\lambda}(x) = \frac{1}{s}p \bigg( \frac{x-l}{s}\bigg)
		$$
		we can compute the metric tensor of the manifold
		$$
		g_{ij}(\lambda) = I_{F} (\lambda) = \mathbb{E}_{p_{\lambda}} \big[ - \nabla \nabla \log p_{\lambda}(x) \big],
		$$
		where the expected value for a general function $f(x)$ is defined as
		$$
		\mathbb{E}_{p_{\lambda}} \big[ f(x) \big] = \int_{p_{\lambda}(x)^+} {p_{\lambda}(x)f(x)} dx \,\,.
		$$
		We observe that, when $f(x)$ is defined in all $\mathbb{R}$, this definition is equivalent to the usual definition
		$$
		\int_{-\infty}^{+\infty} {p_{\lambda}(x)f(x)} dx.
		$$
		With this definition we can repeat the the steps of the proof in Nielsen \cite{Nielsen2021}. Specifically, using the expression
		$$
		\mathbb{E}_{p_{\lambda}} \big[ - \nabla \nabla \log p_{\lambda}(x) \big] = \mathbb{E}_{p_{\lambda}} \big[ - \partial_i \partial_j \log p_{\lambda}(x) \big]= \mathbb{E}_{p_{\lambda}} \big[ \partial_i \log p_{\lambda}(x) \partial_j \log p_{\lambda}(x) \big] \,\,,
		$$
		we can compute all terms, beginning with $g_{11} (\lambda)$:
		$$
		g_{11}(\lambda) =  \mathbb{E}_{p_{\lambda}} \Bigg[ \bigg( \frac{p'_{\lambda}(x)}{p_{\lambda}(x)}\bigg)^2 \Bigg] =  \int_{p_{\lambda}(x)^+} {p_{\lambda}(x) \bigg( \frac{p'_{\lambda}(x)}{p_{\lambda}(x)}\bigg)^2 } dx \,\,.
		$$
		It can be observed that $\Big( \frac{p'_{\lambda}(x)}{p_{\lambda}(x)}\Big)^2 $ is not defined where $p_{\lambda}(x)=0$, but, using our formalization,  integration is not performed over those regions. Moreover, since $p_{\lambda}'(x)$ exists almost everywhere, the integral can be computed even if the function $p_{\lambda}(x)$ is non-smooth. \\
		In \cite{Nielsen2021} is then observed that
		$$
		g_{11} (\lambda) = \mathbb{E}_{p_{\lambda}} \Bigg[ \bigg( \frac{p'_{\lambda}(x)}{p_{\lambda}(x)}\bigg)^2 \Bigg] =  \int_{p_{\lambda}(x)^+} {p_{\lambda}(x) \bigg( \frac{p_{\lambda}'(x)}{p_{\lambda}(x)}\bigg)^2 } dx =
		$$

		$$
		= \int_{p\big(\frac{x-l}{s}\big)^+} {\frac{1}{s} p\bigg(\frac{x-l}{s}\bigg) \Bigg( \frac{1}{s}\frac{p'(\frac{x-l}{s})}{p(\frac{x-l}{s})}\Bigg)^2 } dx =
		$$

		$$
		= \frac{1}{s^2} \int_{p(x)^+} {p(x) \bigg( \frac{p'(x)}{p(x)}\bigg)^2 } dx = \frac{1}{s^2} \mathbb{E}_{p} \Bigg[ \bigg( \frac{p'(x)}{p(x)}\bigg)^2 \Bigg] \,\,.
		$$

		In the same way it can be proved that
		$$
		g_{22} (\lambda) = \frac{1}{s^2}\mathbb{E}_{p}  \Bigg[ \bigg(1+x\frac{p'(x)}{p(x)} \bigg)^2 \Bigg] = \frac{1}{s^2}\int_{p(x)^+} {p(x)\bigg(1+x\frac{p'(x)}{p(x)} \bigg)^2} dx
		$$
		and
		$$
		g_{12} (\lambda) = \frac{1}{s^2} \mathbb{E}_{p} \Bigg[ \frac{p'(x)}{p(x)} \bigg(1+x\frac{p'(x)}{p(x)} \bigg) \Bigg] = \frac{1}{s^2} \int_{p(x)^+} {p(x) \frac{p'(x)}{p(x)} \bigg(1+x\frac{p'(x)}{p(x)} \bigg) } dx.
		$$

		In conclusion, the matrix can be rewritten in the form
		$$
		g_{ij} (\lambda) = g_{ij} (l,s)= \frac{1}{s^2}
		\begin{pmatrix}
			a^2 & c \\
			c & b^2
		\end{pmatrix},
		$$
		where
		$$
		a^2 = \mathbb{E}_{p} \Bigg[ \bigg(\frac{p'(x)}{p(x)} \bigg)^2 \Bigg] \geq 0 \,\,,
		$$
		$$
		b^2 = \mathbb{E}_{p}  \Bigg[ \bigg(1+x\frac{p'(x)}{p(x)} \bigg)^2 \Bigg] \geq 0 \,\,,
		$$
		$$
		c = \mathbb{E}_{p} \Bigg[ \frac{p'(x)}{p(x)} \bigg(1+x\frac{p'(x)}{p(x)} \bigg) \Bigg] \,\,.
		$$
	\end{proof}

	In Nielsen \cite{Nielsen2021} it is observed that for a smooth generatrix $p(x)$ with support $p(x)^+ = (\infty,+\infty)$, if the function is even, i.e. $p(x)=p(-x)$, then the coefficient $c=\mathbb{E}_{p} \Bigg[ \frac{p'(x)}{p(x)} \bigg(1+x\frac{p'(x)}{p(x)} \bigg) \Bigg]$ is equal to zero, which implies that defining $\lambda= (l,s)$ then
	$$
	g_{ij}(\lambda) = \frac{1}{s^2} \begin{pmatrix}
		a^2 & 0 \\
		0 & b^2
	\end{pmatrix} \,\,.
	$$
	By using the change of coordinates $\theta(\lambda) = \big( \frac{a}{b} \lambda_1,\lambda_2 \big)$ (with $a=\sqrt{a^2}$ and $b=\sqrt{b^2}$), the metric tensor becomes
	$$ g_{ij}(\theta) = \frac{b^2}{\theta_2^2}
	\begin{pmatrix}
		1 & 0 \\
		0 & 1
	\end{pmatrix} \,\,.
	$$
	The above is the Poincaré hyperbolic half plane model metric: from this it can be concluded that the curvature is
	$$
	S = -\frac{1}{b^2} \,\,.
	$$
	In the present work paper this result is generalized to non-even $p(x)$.

	\subsection{Classification of non-even distributions}
	In the general case
	\begin{theorem}
		Given a non-smooth arbitrary support distribution $p(x)$ and the non-smooth location-scale family with arbitrary support $\mathcal{F}_p $, the curvature has value
		$$
		S = - \frac{a^2}{a^2b^2 -c^2}\,\,,
		$$
		where
		$$
		a^2 = \mathbb{E}_{p} \Bigg[ \bigg(\frac{p'(x)}{p(x)} \bigg)^2 \Bigg] \geq 0 \,\,,
		$$
		$$
		b^2 = \mathbb{E}_{p}  \Bigg[ \bigg(1+x\frac{p'(x)}{p(x)} \bigg)^2 \Bigg] \geq 0 \,\,, and
		$$
		$$
		c = \mathbb{E}_{p} \Bigg[ \frac{p'(x)}{p(x)} \bigg(1+x\frac{p'(x)}{p(x)} \bigg) \Bigg] \,\,.
		$$
		It can be observed that if $c=0$ (and $a \neq 0$)
		$$
		S = - \frac{a^2}{a^2b^2} = - \frac{1}{b^2}
		$$
		as in \cite{Nielsen2021}.
	\end{theorem}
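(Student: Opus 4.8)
The plan is to exploit the fact, established in the previous theorem, that the metric can be written as $g_{ij}(l,s) = \frac{1}{s^2} M$, where $M = \left(\begin{smallmatrix} a^2 & c \\ c & b^2 \end{smallmatrix}\right)$ is a \emph{constant} symmetric positive-definite matrix: the quantities $a^2$, $b^2$ and $c$ are expectations taken against the fixed generatrix $p$ and therefore do not depend on the coordinates $(l,s)$. All of the coordinate dependence of the metric is concentrated in the conformal factor $1/s^2$. Since the scalar curvature $S$ is invariant under changes of coordinates, my strategy is to find a change of variables that brings this metric into the standard Poincaré half-plane form $\frac{\kappa}{\tilde s^{2}}(d\tilde l^{2}+d\tilde s^{2})$ and then to read off the curvature from the known value for that model.

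First I would complete the square in the quadratic form $a^2\,dl^2 + 2c\,dl\,ds + b^2\,ds^2$, writing it as $a^2\big(dl + \tfrac{c}{a^2}ds\big)^2 + \tfrac{a^2b^2-c^2}{a^2}\,ds^2$. This suggests the \emph{shear} substitution $u = l + \tfrac{c}{a^2}s$ with the scale coordinate $s$ left unchanged; crucially the scale coordinate is preserved, so the factor $1/s^2$ is untouched and the half-plane structure survives. After this step the metric is diagonal, $\frac{1}{s^2}\big(a^2\,du^2 + \tfrac{a^2b^2-c^2}{a^2}\,ds^2\big)$. A further rescaling of the horizontal coordinate, $w = \tfrac{a^2}{\sqrt{a^2b^2-c^2}}\,u$, equalizes the two diagonal entries and yields $\frac{\kappa}{s^2}(dw^2+ds^2)$ with conformal constant $\kappa = \tfrac{a^2b^2-c^2}{a^2}$.

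Finally I would invoke the standard fact that the metric $\frac{\kappa}{y^2}(dx^2+dy^2)$ has constant Gaussian curvature $-1/\kappa$ (the factor $\kappa$ simply rescales the unit-curvature Poincaré model). Since the curvature $S$ of the excerpt coincides with the Gaussian curvature in dimension two, this gives $S = -1/\kappa = -\frac{a^2}{a^2b^2-c^2}$, as claimed; setting $c=0$ recovers $\kappa=b^2$ and $S=-1/b^2$, matching the even case of \cite{Nielsen2021}. As a self-consistency check one can verify that the standard normal generatrix gives $a^2=1$, $b^2=2$, $c=0$ and hence $S=-1/2$, in agreement with the Gaussian example computed earlier.

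The part requiring the most care is the choice of the change of coordinates. The obvious move, diagonalizing $M$ by an orthogonal rotation, would mix $l$ and $s$ and destroy the $1/s^2$ conformal factor, so the resulting metric would no longer be manifestly of half-plane type. The essential observation is that the reduction must be carried out by a lower-triangular (shear-plus-scaling) transformation that acts only on the location coordinate while leaving the scale coordinate $s$ as the conformal variable; completing the square rather than diagonalizing is exactly what makes this possible. As an alternative, fully self-contained route one could instead feed $g_{ij}=\frac{1}{s^2}M$ directly into the Christoffel-symbol and Ricci-contraction formulas of the Introduction and simplify symbolically, a computation well suited to the CAS used in this paper; the constancy of $M$ keeps this brute-force route tractable and must of course return the same value $S=-a^2/(a^2b^2-c^2)$.
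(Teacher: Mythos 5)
Your proposal is correct, but it takes a genuinely different route from the paper: the paper's entire proof of this theorem reads ``The calculations have been performed via CAS and then checked manually, they are omitted for brevity,'' i.e.\ exactly the brute-force Christoffel/Ricci computation that you mention only as a fallback in your last sentence. Your argument instead extends the coordinate-change idea that the paper (following Nielsen) applies only to the even case $c=0$, where a rescaling of the location coordinate turns $\frac{1}{s^2}\mathrm{diag}(a^2,b^2)$ into the Poincar\'e form $\frac{b^2}{\theta_2^2}I$. Your key step is the right one for $c\neq 0$: completing the square gives the shear $u = l + \frac{c}{a^2}s$, which is lower-triangular and hence leaves the conformal variable $s$ untouched, and the subsequent rescaling $w = \frac{a^2}{\sqrt{a^2b^2-c^2}}\,u$ produces $\frac{\kappa}{s^2}\left(dw^2+ds^2\right)$ with $\kappa = \frac{a^2b^2-c^2}{a^2}$, so $S=-1/\kappa$ by the standard fact that scaling a metric by $\kappa$ divides the curvature by $\kappa$; your identification of the paper's $S=\frac{1}{2}S_R$ with the Gaussian curvature in dimension $2$ is also correct, and is consistent with the paper's own Gaussian and Nielsen special cases. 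What your approach buys is a short, human-verifiable, coordinate-geometric proof that also explains \emph{why} the formula holds (any metric $\frac{1}{s^2}M$ with $M$ constant and positive definite is isometric to a rescaled hyperbolic half-plane); the CAS route buys only the answer. One small improvement: state explicitly the non-degeneracy hypotheses $a^2>0$ and $a^2b^2-c^2>0$ (positive definiteness of $M$), since both coordinate changes require them; these are implicit in the theorem's formula, and the degenerate cases are treated separately in the paper's subsequent theorem.
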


	\begin{proof}
		The calculations have been perfomed via CAS and then checked manually, they are omitted for brevity.
	\end{proof}

	A more general result follows:

	\begin{theorem}
		Given a non-smooth arbitrary support distribution $p(x)$ and the non-smooth location-scale family with arbitrary support $\mathcal{F}_p $, the curvature (where defined) is negative:
		$$
		S = - \frac{a^2}{a^2b^2 -c^2} < 0 \,\,.
		$$
	\end{theorem}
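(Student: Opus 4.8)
The plan is to read the sign of $S$ directly from the explicit formula $S = -a^2/(a^2 b^2 - c^2)$ established in the preceding theorem, so that the whole question reduces to controlling the sign of the numerator $a^2$ and of the denominator $a^2 b^2 - c^2$. The key observation is that the three coefficients are exactly the Gram-matrix entries of two functions under the $p$-expectation. Setting
$$U(x) = \frac{p'(x)}{p(x)}, \qquad V(x) = 1 + x\,\frac{p'(x)}{p(x)},$$
one has $a^2 = \mathbb{E}_{p}[U^2]$, $b^2 = \mathbb{E}_{p}[V^2]$ and $c = \mathbb{E}_{p}[UV]$, where both $U$ and $V$ are well defined almost everywhere on $p(x)^+$ by the standing hypotheses that $p'$ exists and is finite a.e.\ and that integration is carried out only over the positive support.

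First I would apply the Cauchy--Schwarz inequality in the Hilbert space $L^2(p)$ of square-integrable functions with respect to the measure $p(x)\,dx$ on $p(x)^+$. This gives immediately
$$c^2 = \big(\mathbb{E}_{p}[UV]\big)^2 \le \mathbb{E}_{p}[U^2]\,\mathbb{E}_{p}[V^2] = a^2 b^2,$$
so that $a^2 b^2 - c^2 \ge 0$ unconditionally. Since also $a^2 \ge 0$, the fraction $a^2/(a^2 b^2 - c^2)$ is non-negative wherever it is defined, hence $S \le 0$; the remaining work is to upgrade both inequalities to strict ones.

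Next I would exploit the fact that $a^2 b^2 - c^2 = s^4 \det(g_{ij})$ is, up to a positive factor, the determinant of the metric tensor, while $a^2 = s^2 g_{11}$ is a diagonal entry up to a positive factor. The curvature $S$ is only \emph{defined} at points where $g_{ij}$ is a genuine non-degenerate, positive-definite Riemannian metric, since otherwise the inverse metric $g^{ij}$, the Christoffel symbols, and hence $S_R$ do not exist. By Sylvester's criterion, positive-definiteness of the symmetric matrix $\tfrac{1}{s^2}\big(\begin{smallmatrix} a^2 & c \\ c & b^2 \end{smallmatrix}\big)$ is equivalent to $a^2 > 0$ and $a^2 b^2 - c^2 > 0$ holding simultaneously. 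Therefore at every point where $S$ is defined both quantities are strictly positive, and we conclude $S = -a^2/(a^2 b^2 - c^2) < 0$.

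I expect the only delicate point to be this last identification: translating the equality case of Cauchy--Schwarz, namely linear dependence of $U$ and $V$ in $L^2(p)$, into the statement that the metric degenerates and the curvature ceases to be defined. Concretely, equality $c^2 = a^2 b^2$ forces $U$ and $V$ to be proportional $p$-a.e., which makes the Gram matrix singular and the family fail to be a $2$-dimensional Riemannian manifold at that parameter; the uniform generatrix, for which $p' = 0$ a.e.\ and thus $a^2 = 0$, is the prototypical degenerate case. Once this correspondence is made precise, the clause ``where defined'' in the statement is exactly what excludes the equality case, and the strict negativity of $S$ follows.
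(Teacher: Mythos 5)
Your proposal is correct, and its core coincides with the paper's: both arguments hinge on the Cauchy--Schwarz inequality applied to the same pair of functions (your $U,V$ are the paper's $A,B$) in $L^2(p\,dx)$ over the positive support, giving $a^2b^2 - c^2 \ge 0$ and hence $S \le 0$. The paper proves this inequality by hand, expanding $\int p\,[B-\alpha A]^2\,dx \ge 0$ with $\alpha = c/a^2$, whereas you invoke it as a known fact; that difference is cosmetic. Where you genuinely diverge is in upgrading $S \le 0$ to $S < 0$. The paper does this by explicitly characterizing the equality cases: it solves $p'=0$ (constant density, which either fails to normalize or yields a rank-one metric) and the ODE $(\alpha - x)\,p'/p = 1$ (the truncated $e^k/(\alpha-x)$ density, for which $b^2 = \alpha^2 a^2$, $c = \alpha a^2$ and the metric again drops to rank one), concluding both boundary cases are singular points of the manifold. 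You instead short-circuit the case analysis with Sylvester's criterion: ``where the curvature is defined'' means the metric is a genuine (positive-definite, invertible) Riemannian metric, which for the matrix $\frac{1}{s^2}\bigl(\begin{smallmatrix} a^2 & c \\ c & b^2 \end{smallmatrix}\bigr)$ is exactly $a^2 > 0$ and $a^2b^2 - c^2 > 0$, so both strict inequalities come for free. One small point worth making explicit in your write-up: invertibility alone ($\det g \neq 0$) and positive-definiteness coincide here only because Cauchy--Schwarz already forces $\det g \ge 0$, and $a^2 = 0$ would force $c = 0$ as well; this is the silent step that lets you treat ``non-degenerate'' and ``positive-definite'' interchangeably. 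In exchange for being longer, the paper's route yields strictly more information --- a concrete description of which generatrices produce the singular points --- while your route proves the stated theorem more economically and with no loss of rigor.
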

	\begin{proof}
		Studying the sign of
		$$
		S = - \frac{a^2}{a^2b^2 -c^2} \,\,,
		$$
		we observe that the numerator of $S$ is positive (since $a^2$ is the expected value of a positive function) and we only have to determine the sign of the quantity $a^2 b^2-c^2$. We define the auxiliary functions
		$$
		A(x) = \frac{p'(x)}{p(x)} \,\,,
		$$
		$$
		B(x) = \bigg(1+x\frac{p'(x)}{p(x)}\bigg) \,\,,
		$$
		using which the coefficients become
		$$
		a^2 = \mathbb{E}_p \big[ A(x)^2 \big] = \int_{p(x)^+} {p(x) A(x)^2} dx \,\,,
		$$
		$$
		b^2 = \mathbb{E}_p \big[ B(x)^2 \big] = \int_{p(x)^+} {p(x) B(x)^2} dx \,\,, and
		$$
		$$
		c = \mathbb{E}_p [A(x)B(x)] = \int_{p(x)^+} {p(x) A(x)B(x)} dx \,\,.
		$$

		Proceeding as in the proof of the Cauchy-Schwarz inequality, defining
		$$
		\alpha= \frac{\int_{p(x)^+} {p(x) A(x)B(x)} dx}{\int_{p(x)^+} {p(x) A(x)^2} dx} = \frac{c}{a^2}
		$$
		we observe that
		$$
		\int_{p(x)^+} {p(x) [B(x) - \alpha A(x)]^2} dx \geq 0 \,\,,
		$$
		then
		$$
		\int_{p(x)^+} {p(x) [B(x) - \alpha A(x)]^2} dx  = \int_{p(x)^+} {p(x) [B(x)^2 - 2 \alpha A(x) B(x) + \alpha^2 A(x)^2]} dx =
		$$
		$$
		= \int_{p(x)^+} {p(x) [B(x)^2]} dx - 2 \alpha \int_{p(x)^+} {p(x) [A(x)B(x)]} dx + \alpha^2 \int_{p(x)^+} {p(x) [A(x)^2]} dx =
		$$
		$$
		= b^2 - 2 \frac{c}{a^2} c + \bigg(\frac{c}{a^2} \bigg)^2 a^2 = b^2 - \frac{c^2}{a^2} \geq 0 \,\,.
		$$

		Supposing $a^2 \neq 0$, by multiplying both sides of the previous expression by  $a^2 \geq 0$ we obtain
		$$
		a^2 b^2 - c^2 \geq 0 \,\,,
		$$
		from which follows that
		$$
		S = - \frac{a^2}{a^2b^2 -c^2} \leq 0 \,\,.
		$$
		This means that (when non-degenerate) the curvature of two-dimensional location-scale families is negative, and the corresponding manifolds are locally hyperbolic.\\\\

		It is of interest at this point to analyze the cases of flat geometry ($S=0$, implying $a^2 = 0$) and degeneracy ($a^2b^2 -c^2=0$, implying in the limit that $S=\infty$).\\\\

		Supposing $S=0$ is equivalent to supposing
		$$
		a^2 = \int_{p(x)^+} {p(x) [A(x)^2]} dx = \int_{p(x)^+} {p(x) \bigg(\frac{p'(x)}{p(x)} \bigg)^2} dx = \int_{p(x)^+} {\frac{p'(x)^2}{p(x)} } dx  = 0 \,\,.
		$$
		In conclusion, since $p(x)>0$ and $p'(x)^2\geq 0$ the above condition is equivalent to
		$$
		p'(x) = 0, \forall x \in p(x)^+ \,\,,
		$$
		which is a well known differential equation with the well known smooth solution
		$$
		p(x) = k, \forall x \in p(x)^+
		$$
		for some real constant $k \in \mathbb{R}$.
		Now, if the support $p(x)^+$ has infinite Lebesgue measure, such a distribution cannot exist (any such function would not integrate to a finite value). But even in the case where the support had finite measure we would have
		$$
		c = \int_{p(x)^+} {p(x) \frac{p'(x)}{p(x)} \bigg(1+x\frac{p'(x)}{p(x)} \bigg) } dx =0
		$$
		making the metric tensor
		$$ g_{ij}(\lambda) =
		\begin{pmatrix}
			0 & 0 \\
			0 & b^2
		\end{pmatrix} \,\,,
		$$
		hence of rank $1$, hence this particular density would correspond to a singular point of the manifold (independently of the smoothness of the solution). Such a case will therefore not be admissible.\\\\

		Considering now the condition $a^2b^2 -c^2 = 0$, if $a^2 \neq 0$ this is clearly equivalent to
		$$
		\int_{-\infty}^{+\infty} {p(x) [B(x) - \alpha A(x)]^2} dx = 0 \,\,,
		$$
		which implies
		$$
		B(x)-\alpha A(x)  = 0, \forall x \in \mathbb{R}\,\,.
		$$
		We must thus solve the differential equation
		$$
		\alpha \frac{p'(x)}{p(x)} - \bigg( 1 + x \frac{p'(x)}{p(x)} \bigg) = 0 \,\,,
		$$
		which can be rearranged as
		$$
		(\alpha - x) \frac{p'(x)}{p(x)} = 1 \,\,.
		$$
		We first look for a continuous solution: in this case the equation can be solved using the differentials method:
		$$
		\int \frac{dp}{p} = \int \frac{dx}{ \big(\alpha - x \big) }
		$$
		$$
		\log p = - \log  \big(\alpha - x \big) + k
		$$
		where $k$ is an arbitrary constant. Passing to the exponential
		$$
		p(x) = \frac{e^k}{\big(\alpha - x \big) } \,\,.
		$$
		We observe that this distribution could potentially be well defined over a finite-measure subset of $(-\infty,\alpha)$. We may suppose, without loss of generality, that the support of the distribution is of the form $(\alpha-2 \varepsilon, \alpha- \varepsilon)$ for arbitrary real numbers $\varepsilon, \alpha >0$:
		$$
		p_{\alpha}(x) =
		\begin{cases}
			\frac{1}{K(\alpha,\varepsilon)}\frac{1}{(\alpha - x ) }, \text{ if } x \in [\alpha-2\varepsilon, \alpha- \varepsilon] \\
			0, \text{ othewrise}
		\end{cases} \,\,,
		$$
		where
		$$
		K(\alpha,\varepsilon) = \int_{\alpha-2\varepsilon}^{\alpha- \varepsilon} {\frac{1}{\alpha-x}} dx >0 \,\,.
		$$
		It can be observed that, independently from the smoothness of the solution, the following equality holds pointwise:
		$$
		\frac{p'_{\alpha}(x)}{p_{\alpha}(x)} = \frac{1}{\alpha-x} , \forall x \in [\alpha-2\varepsilon, \alpha- \varepsilon] \,\,,
		$$
		which implies
		$$
		1+x \frac{p'_{\alpha}(x)}{p_{\alpha}(x)} = \alpha \frac{p'_{\alpha}(x)}{p_{\alpha}(x)} \,\,.
		$$
		The curvature coefficients can therefore be computed as
		$$
		a^2 = \int_{\alpha-2\varepsilon}^{\alpha- \varepsilon} {p_{\alpha}(x) \bigg( \frac{p'_{\alpha}(x)}{p_{\alpha}(x)} \bigg)^2 } dx \,\,,
		$$
		$$
		b^2 = \int_{\alpha-2\varepsilon}^{\alpha- \varepsilon} {p_{\alpha}(x)  \bigg( 1+x \frac{p'_{\alpha}(x)}{p_{\alpha}(x)}\bigg)^2 } dx = \alpha^2 \int_{\alpha-2\varepsilon}^{\alpha- \varepsilon} {p_{\alpha}(x)  \bigg( \frac{p'_{\alpha}(x)}{p_{\alpha}(x)} \bigg)^2 } dx = \alpha^2 a^2 \,\,, and
		$$
		$$
		c = \int_{\alpha-2\varepsilon}^{\alpha- \varepsilon} {p_{\alpha}(x) \frac{p'_{\alpha}(x)}{p_{\alpha}(x)} \bigg( 1+x \frac{p'_{\alpha}(x)}{p_{\alpha}(x)}\bigg) } dx = \alpha \int_{\alpha-2\varepsilon}^{\alpha- \varepsilon} {p_{\alpha}(x)  \bigg( \frac{p'_{\alpha}(x)}{p_{\alpha}(x)} \bigg)^2 } dx = \alpha a^2 \,\,.
		$$
		The curvature then takes the form
		$$
		S = - \frac{a^2}{a^2b^2-c^2} = - \frac{a^2}{a^2 \alpha^2 a^2 - \alpha^2 (a^2)^2} = - \frac{a^2}{0} = -\infty \,\,,
		$$
		but also in this case it can be observed that the metric tensor is degenerate, being of rank 1 (instead of rank 2).
	\end{proof}

	\medskip

	In conclusion, it can be claimed that in the general case of 2-dimensional location-scale families, when no singularities occur, the curvature
	$$
	S = - \frac{a^2}{a^2b^2 -c^2} < 0 \,\,.
	$$
	This generalizes the result of Nielsen \cite{Nielsen2021} to non-even and non-smooth $p(x)$ with arbitrary support. This result has wide implications because most of experimentally used distributions constitute location-scale families and we have proved that in the 2-dimensional case (also frequently occurring) they are locally hyperbolic in the Information Geometry sense.

	\subsection{Examples of location-scale families with arbitrary support}
	In the following is a brief list of well known examples belonging to the family of 2-dimensional location-scale distributions with arbitrary support.

	\begin{enumerate}
		\item The Gaussian Normal distribution:
		$$
		p(x|\mu,\sigma^2) = \frac{1}{\sqrt{2 \pi \sigma^2}} e^{- \frac{(x-\mu)^2}{2 \sigma^2}}
		$$
		with generatrix
		$$
		p(x) = \frac{1}{\sqrt{\pi}} e^{-x^2} \,\,.
		$$
		This function is also a classical location-scale function, being smooth and having support $p(x)^+ = (-\infty,+\infty)$. It can be observed that since $p(x)$ is even, then the curvature is in the form
		$$
		S = - \frac{1}{b^2} \,\,.
		$$
		By computing $b^2$ as
		$$
		b^2 = \int_{p(x)^+} {p(x) \bigg(1+x\frac{p'(x)}{p(x)} \bigg)^2} dx = \int_{-\infty}^{+\infty} {\frac{1}{\sqrt{\pi}} e^{-x^2} \Bigg(1 + x \frac{\frac{-2x}{\sqrt{\pi}} e^{-x^2} }{\frac{1}{\sqrt{\pi}} e^{-x^2} } \Bigg)^2 } dx =
		$$
		$$
		=  \int_{-\infty}^{+\infty} {\frac{1}{\sqrt{\pi}} e^{-x^2} (1-2x^2)^2} dx = \int_{-\infty}^{+\infty} {\frac{1}{\sqrt{\pi}} e^{-x^2} (1-4x^2 + 4x^4)} dx = 2 \,\,,
		$$
		the curvature in this case is
		$$
		S = - \frac{1}{b^2} = - \frac{1}{2} \,\,.
		$$

		\bigskip

		\item The Cauchy distribution
		$$
		p(x|x_0,\gamma) = \frac{1}{\pi} \frac{\gamma}{(x-x_0)^2+\gamma^2}
		$$
		with generatrix
		$$
		p(x) = \frac{1}{\pi} \frac{1}{x^2+1}
		$$
		is also a classical location-scale function, being smooth and with support $p(x)^+ =(-\infty,+\infty)$.
		To find the curvature we observe that also in this case the function $p(x)$ is even, therefore only the coefficient $b^2$ will be computed:
		$$
		b^2 = \int_{p(x)^+} {p(x) \bigg(1+x\frac{p'(x)}{p(x)} \bigg)^2} dx = \int_{-\infty}^{+\infty} {\frac{1}{\pi} \frac{1}{1+x^2} \Bigg(1+x\frac{\frac{-2x}{\pi (1+x^2)^2}}{\frac{1}{\pi (1+x^2)}} \Bigg)^2} dx =
		$$
		$$
		= \int_{-\infty}^{+\infty} {\frac{1}{\pi} \frac{(1-x^2)^2}{(1+x^2)^3}} dx = \Bigg[ \frac{1}{\pi} \frac{(x^2+1)^2 \tan^{-1}(x)- x^3 +x}{2 (x^2+1)^2}\Bigg|_{-\infty}^{+\infty} = \frac{1}{2} \,\,.
		$$
		The last integral was computed by means of Wolfram Alpha. The curvature is thus
		$$
		S = - \frac{1}{b^2} = - 2 \,\,,
		$$
		once again negative.

		\bigskip

		\item The general exponential distribution
		$$
		p(x|\lambda,\zeta) =
		\begin{cases}
			\lambda e^{-\lambda(x-\zeta)}, \text{ if } x \geq \zeta \\
			0, \text{ otherwise}
		\end{cases}
		$$
		with generatrix
		$$
		p(x) =
		\begin{cases}
			e^{-x}, \text{ if } x \geq 0 \\
			0, \text{ otherwise}
		\end{cases}
		$$
		is not a classical location-scale family since the generatrix function has support $p(x)^+ = (0,+\infty)$, is not differentiable in $x=0$ and is not even.
		To find the curvature in this case we will needed to compute all terms:
		$$
		a^2 = \int_{p(x)^+} {p(x) \bigg(\frac{p'(x)}{p(x)} \bigg)^2} dx = \int_{0}^{+\infty} {e^{-x} \Bigg(\frac{- e^{-x}}{e^{-x}} \Bigg)^2} dx = \int_{0}^{+\infty} {e^{-x}} dx = 1 \,\,,
		$$
		$$
		b^2 = \int_{p(x)^+} {p(x) \bigg(1+x\frac{p'(x)}{p(x)} \bigg)^2} dx = \int_{0}^{+\infty} {e^{-x} \Bigg(1+x\frac{-e^{-x}}{e^{-x}} \Bigg)^2} dx =
		$$
		$$
		= \int_{0}^{+\infty} {e^{-x} (1-x)^2} dx = \int_{0}^{+\infty} {e^{-x} \big(1-2x+x^2 \big)} dx = 1\,\,,
		$$
		$$
		c = \int_{p(x)^+} {p(x) \frac{p'(x)}{p(x)} \bigg(1+x\frac{p'(x)}{p(x)} \bigg)} dx = \int_{0}^{+\infty} {e^{-x} \frac{-e^{-x}}{e^{-x}} \bigg(1+x\frac{-e^{-x}}{e^{-x}} \bigg)} dx =
		$$
		$$
		= - \int_{0}^{+\infty} {e^{-x} (1-x)} dx = 0 \,\,.
		$$
		Then the curvature is
		$$
		S  = - \frac{a^2}{a^2 b^2 - c^2} = - \frac{1}{1 -0} = -1 \,\,.
		$$

		\bigskip

		\item The Laplace distribution
		$$
		p(x|\mu,b) = \frac{1}{2b} e^{-\frac{|x-\mu|}{b}}
		$$
		with generatrix
		$$
		p(x) = \frac{1}{2} e^{-|x|}
		$$
		is also not a classical location-scale family, since the generatrix function is not differentiable in $x=0$ even though the support is $\mathbb{R}$,  $p(x)^+ = (-\infty,+\infty)$.
		In this case the function $p(x)$ is even, and it will be sufficient to compute only the coefficient $b^2$:
		$$
		b^2 = \int_{p(x)^+} {p(x) \bigg(1+x\frac{p'(x)}{p(x)} \bigg)^2} dx = \int_{-\infty}^{+\infty} {\frac{1}{2} e^{-|x|} \bigg(1 - x \frac{|x|}{x} \bigg)^2} dx =
		$$
		$$
		= \int_{0}^{+\infty} {e^{-x} (1-x)^2} dx = \int_{0}^{+\infty} {e^{-x} \big(1-2x +x^2 \big)} dx = 1 \,\,.
		$$
		The curvature is then
		$$
		S = -\frac{1}{b^2} = -1 \,\,.
		$$
	\end{enumerate}

	\bigskip

	We may observe that, following Strimmer \cite{Strimmer2022}, the curvature of the manifold is connected to Entropy: the higher the absolute value of the curvature, the more precise will be the maximum-likelihood estimation, but less general will be the distribution. From our few examples it can be seen that the Gaussian family, which can be proved to be the maximum-entropy distribution family with prescribed (mean and) variance over $(-\infty,+\infty)$), has absolute curvature $|S| = \frac{1}{2}$, in contrast with the other distributions for which we have computed a curvature $|S|>\frac{1}{2}$. This suggests that the Gaussian is in fact the flattest location-scale distribution family with prescribed (mean and) variance over $(-\infty,+\infty)$ in the case $d = 2$.

	\section{Exponential families}
	\subsection{New flatness criteria for exponential families}
	In Amari \cite{Amari2016} we find among other things the definition of an exponential family of distributions:
	\begin{definition}
		Given a point $x \in \mathbb{R}^d$ and a parameter vector $\theta= (\theta_1,..., \theta_d) \in \mathbb{R}^d$ of dimension $d$, by defining the functions $h(x) = (h_1(x), ..., h_d(x))$, $k(x)$ and $\psi(\theta)$ it is possible to define the distribution of the exponential family as
		$$
		p(x|\theta) = e^{\theta \cdot h(x) + k(x) - \psi(\theta)} \,\,,
		$$
		where the scalar function $\psi(\theta)$ is called the \emph{regularization function} and can be proved to be convex.
	\end{definition}

	In Amari \cite{Amari2016} then the following is proved:
	\begin{theorem}
		Given an exponential family the manifold metric tensor is
		$$
		g_{ij} (\theta)  = I_F(\theta) = \mathbb{E} \big[ - \nabla \nabla \log p(x|\theta) \big] = \nabla \nabla \psi (\theta) \,\,,
		$$
		where $\nabla \nabla \psi = \big( \partial_i \partial_j \psi \big)$ is the Hessian of the function $\psi(\theta)$ and $\partial_i= \frac{\partial}{\partial \theta_i}$ are the partial derivatives with respect to the components of the parameter.
	\end{theorem}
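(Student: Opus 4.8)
The plan is to compute the two $\theta$-derivatives of $\log p(x|\theta)$ directly and exploit the defining structure of an exponential family, namely that the only term carrying $\theta$-dependence is the scalar regularization function $\psi(\theta)$. First I would write the log-likelihood in the explicit form
$$\log p(x|\theta) = \theta \cdot h(x) + k(x) - \psi(\theta),$$
and take the first partial derivative with respect to $\theta_i$. Since $h(x)$ and $k(x)$ do not depend on $\theta$, this yields $\partial_i \log p(x|\theta) = h_i(x) - \partial_i \psi(\theta)$.

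Next I would differentiate once more with respect to $\theta_j$. The crucial observation is that $h_i(x)$ is constant in $\theta$, so it is annihilated by the second derivative, leaving
$$\partial_i \partial_j \log p(x|\theta) = -\partial_i \partial_j \psi(\theta).$$
The key structural fact is that the $x$-dependence has been completely eliminated: the linear coupling $\theta \cdot h(x)$ contributes nothing to the second derivative, and $k(x)$ never involved $\theta$ in the first place. Thus $-\partial_i \partial_j \log p(x|\theta) = \partial_i \partial_j \psi(\theta)$ is a deterministic quantity, independent of the integration variable $x$.

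Finally, I would substitute this into the definition of the metric tensor. Because the integrand is constant in $x$ while $p(x|\theta)$ integrates to $1$, the expectation acts as the identity and we obtain
$$g_{ij}(\theta) = \mathbb{E}\big[-\partial_i \partial_j \log p(x|\theta)\big] = \mathbb{E}\big[\partial_i \partial_j \psi(\theta)\big] = \partial_i \partial_j \psi(\theta),$$
which is precisely the Hessian $\nabla \nabla \psi(\theta)$. (Alternatively, one could use the covariance form $g_{ij} = \mathbb{E}[\partial_i \log p \, \partial_j \log p]$ established earlier in the excerpt and show that $\mathrm{Cov}[h_i, h_j] = \partial_i \partial_j \psi$, but the Hessian route is the shortest.)

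There is no genuine analytical obstacle in this argument; it is one of the cleanest identities in information geometry. The only point requiring care is the bookkeeping of which factors depend on $\theta$ and which on $x$, and in particular recognizing that the second $\theta$-derivative kills every $x$-dependent term, rendering the expectation trivial. As a consistency check, the convexity of $\psi$ asserted in the definition matches the fact that the Fisher information matrix $g_{ij}$ is always positive semidefinite, since a Hessian of a convex function is positive semidefinite.
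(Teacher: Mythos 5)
Your proof is correct and complete: the log-likelihood $\log p(x|\theta) = \theta \cdot h(x) + k(x) - \psi(\theta)$ is linear in $\theta$ except for the $-\psi(\theta)$ term, so the second $\theta$-derivative is the deterministic quantity $-\partial_i \partial_j \psi(\theta)$, and the expectation is then trivial because $p(x|\theta)$ integrates to $1$. Note that the paper itself offers no proof of this theorem at all --- it simply cites Amari's textbook --- so your argument supplies exactly the (standard) derivation that the paper omits, and it is the right one; the only implicit hypothesis is that $\psi$ is twice differentiable, which the statement already assumes by referring to its Hessian.
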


	From this point onward we will restrict our attention to dimension 2. Supposing thus $\theta  = (\theta_1, \theta_2)$, the metric tensor becomes
	$$
	g_{ij} (\theta) =
	\begin{pmatrix}
		\partial_1^2 \psi (\theta) & \partial_1 \partial_2 \psi (\theta)  \\
		\partial_2 \partial _1 \psi (\theta)  & \partial_2^2 \psi (\theta)
	\end{pmatrix}
	$$
	where $\partial_1 = \frac{\partial}{\partial \theta_1}$ and $\partial_2 = \frac{\partial}{\partial \theta_2}$.

	With the aid of a CAS a novel formula for the curvature in the case of Exponential Families can be more easily derived:

	\begin{theorem}
		For an Exponential family with metric tensor of the form
		$$
		g_{ij} (\theta) =
		\begin{pmatrix}
			g_{11} (\theta)  & g_{12} (\theta)  \\
			g_{21} (\theta)  & g_{22} (\theta)
		\end{pmatrix}
		=
		\begin{pmatrix}
			\partial_1^2 \psi (\theta)  & \partial_1 \partial_2 \psi (\theta)  \\
			\partial_2 \partial _1 \psi (\theta)  & \partial_2^2 (\theta)  \psi
		\end{pmatrix} \,\,,
		$$
		the curvature can be computed by the formula
		$$
		S (\theta)  = \frac{\left| \begin{matrix}
				g_{11} & \partial_2 g_{11}  & \partial_1 g_{11}  \\
				g_{12}  & \partial_2 g_{12}  & \partial_1 g_{12}  \\
				g_{22}  & \partial_2 g_{22}  & \partial_1 g_{22}
			\end{matrix} \right| }{4 \left| \begin{matrix}
				g_{11} & g_{12}  \\
				g_{21}  & g_{22}
			\end{matrix}\right|^2} \,\,.
		$$
	\end{theorem}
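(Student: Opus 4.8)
The plan is to exploit the fact that for an exponential family the metric is a \emph{Hessian metric}, $g_{ij} = \partial_i\partial_j\psi$, which forces the first derivatives of the metric to be totally symmetric: writing $\psi_{ijk} = \partial_i\partial_j\partial_k\psi$, one has $\partial_k g_{ij} = \psi_{ijk}$, symmetric in all three indices. In particular $\partial_2 g_{11} = \partial_1 g_{12}$ and $\partial_2 g_{12} = \partial_1 g_{22}$; these coincidences are exactly what make the three columns of the numerator determinant (the metric, its $\partial_2$-derivatives, and its $\partial_1$-derivatives) share entries, and they will drive the whole computation. As a first reduction I would simplify the Christoffel symbols: because $\partial_k g_{lj} = \partial_j g_{lk} = \partial_l g_{jk} = \psi_{jkl}$, the symmetrization in the definition collapses and $\Gamma^i_{jk} = \tfrac12 g^{il}\psi_{ljk}$. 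Equivalently, setting $\gamma_{ijk} = \tfrac12\psi_{ijk} = \tfrac12\partial_k g_{ij}$ and $\gamma^i_{jk} = g^{il}\gamma_{ljk}$, the Levi-Civita connection is $\Gamma^i_{jk}=\gamma^i_{jk}$, with $\gamma_{ijk}$ totally symmetric.

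The conceptual heart of the proof, and the step I expect to be the main obstacle, is to show that all second-derivative-of-metric contributions (equivalently, the fourth derivatives $\psi_{ijkl}$) cancel, so that the curvature is purely quadratic in the first derivatives of $g$. In the curvature expression $R^i_{jkl} = \partial_k\Gamma^i_{lj} - \partial_l\Gamma^i_{kj} + \Gamma^i_{km}\Gamma^m_{lj} - \Gamma^i_{lm}\Gamma^m_{kj}$ induced by the paper's Ricci formula, the derivative terms split into a piece $g^{im}(\partial_k\gamma_{mlj} - \partial_l\gamma_{mkj}) = \tfrac12 g^{im}(\psi_{mljk} - \psi_{mkjl})$ that vanishes because fourth partials commute, and a piece coming from $\partial_k g^{im} = -2g^{ia}g^{mb}\gamma_{abk}$ that is again quadratic in $\gamma$. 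Collecting terms and relabelling dummy indices yields the clean Hessian-geometry identity
$$R^i_{jkl} = \gamma^i_{al}\gamma^a_{jk} - \gamma^i_{ak}\gamma^a_{jl}.$$
This is the key point: the apparent dependence on second derivatives of the metric is illusory, and it is precisely what permits a formula involving only $g_{ij}$ and its first derivatives. (An alternative route would start from the $2$D Brioschi formula and watch the second derivatives cancel after substituting the Hessian symmetries, but the connection-based derivation makes the cancellation transparent rather than accidental.)

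From here the argument is bookkeeping. In dimension $2$ the paper's curvature coincides with the Gaussian curvature, $S = R_{1212}/\det g$ (since $S=\tfrac12 S_R$ and $S_R = 2K$ in two dimensions), and lowering one index gives $R_{1212} = g^{ab}\big(\gamma_{12a}\gamma_{12b} - \gamma_{11a}\gamma_{22b}\big)$. I would substitute the explicit inverse metric $g^{ab} = \tfrac{1}{\det g}\bigl(\begin{smallmatrix} g_{22} & -g_{12}\\ -g_{12} & g_{11}\end{smallmatrix}\bigr)$ together with $\gamma_{ijk} = \tfrac12\partial_k g_{ij}$, expand, and match the result against the cofactor expansion of the $3\times 3$ determinant along its first column. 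Using the symmetries $\partial_2 g_{11} = \partial_1 g_{12}$ and $\partial_2 g_{12}=\partial_1 g_{22}$, the numerator determinant equals $4\det g\cdot R_{1212}$, so that $S = R_{1212}/\det g$ equals the determinant divided by $4(\det g)^2$, as claimed. This final comparison is the routine identity the authors delegate to the CAS: it is a finite polynomial identity in the metric components $g_{11},g_{12},g_{22}$ and their first derivatives, and once the fourth-derivative cancellation above is in place it requires no further geometric input.
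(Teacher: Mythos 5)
Your proposal is correct, but it reaches the formula by a genuinely different route than the paper. The paper's proof is computational: it evaluates the scalar curvature of the Hessian metric by direct (CAS-assisted) calculation, obtaining an explicit rational expression in the third partials of $\psi$, then recognizes the denominator as $(\det g)^2$ and, using the identifications $\partial_2 g_{11} = \partial_1 g_{12}$, $\partial_1 g_{22} = \partial_2 g_{12}$, pattern-matches the six-term numerator as the cofactor expansion of the $3\times 3$ determinant. You instead prove a structural lemma first: for any Hessian metric the Christoffel symbols collapse to $\Gamma^i_{jk} = \tfrac12 g^{il}\psi_{ljk}$, the fourth-derivative contributions to the Riemann tensor cancel by equality of mixed partials, and $R^i_{jkl} = \gamma^i_{al}\gamma^a_{jk} - \gamma^i_{ak}\gamma^a_{jl}$ with $\gamma_{ijk} = \tfrac12\partial_k g_{ij}$ --- a dimension-independent identity of Hessian geometry --- and only then specialize to $d=2$ via $S = R_{1212}/\det g$. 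I verified your key steps: with the paper's conventions one indeed gets $R_{1212} = g^{ab}\big(\gamma_{12a}\gamma_{12b} - \gamma_{11a}\gamma_{22b}\big)$, and after substituting $\gamma_{ijk}=\tfrac12\partial_k g_{ij}$ and the explicit inverse metric, the totally symmetric structure of $\partial_k g_{ij}$ makes the cross terms collapse so that the $3\times3$ determinant equals $4\det g\, R_{1212}$; as a spot check, at $\theta=(0,\tfrac12)$ in the paper's Gaussian example both your expression and the determinant formula give $S=-\tfrac12$. What your approach buys: it explains structurally \emph{why} the curvature of an exponential family involves only $g$ and its first derivatives (the paper discovers this a posteriori from CAS output), it isolates a lemma valid in every dimension, and it is checkable entirely by hand. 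What the paper's approach buys: it needs no machinery beyond the definitions, delegating the heavy algebra to an explainable tool, which is precisely the methodological point the authors want to illustrate.
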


	\begin{proof}
		By direct computation of the curvature in this rather general case we have
		$$
		S (\theta)  = \frac{1}{4} \frac {-\partial_1^2\psi \partial_2^3\psi \partial_1^2 \partial_2 \psi + \partial_1^2 \psi (\partial_1 \partial_2^2 \psi)^2
		}
		{(\partial_1^2 \psi)^2 (\partial_2^2 \psi)^2 - 2 \partial_1^2 \psi \partial_2^2 \psi (\partial_1 \partial_2 \psi)^2 + (\partial_1 \partial_2 \psi)^4
		}+
		$$
		$$
		\frac{1}{4} \frac{	- \partial_1^3\psi \partial_2^2 \psi \partial_1 \partial_2^2 \psi 	+ \partial_2^2 \psi (\partial_2 \partial_1^2 \psi)^2
		}{(\partial_1^2 \psi)^2 (\partial_2^2 \psi)^2 - 2 \partial_1^2 \psi \partial_2^2 \psi (\partial_1 \partial_2 \psi)^2 + (\partial_1 \partial_2 \psi)^4
		}+
		$$
		$$
		\frac{1}{4} \frac{- \partial_1 \partial_2 \psi \partial_1 \partial_2^2 \psi \partial_1^2 \partial_2 \psi + \partial_1^3 \psi \partial_2^3 \psi \partial_1 \partial_2 \psi }{{(\partial_1^2 \psi)^2 (\partial_2^2 \psi)^2 - 2 \partial_1^2 \psi \partial_2^2 \psi (\partial_1 \partial_2 \psi)^2 + (\partial_1 \partial_2 \psi)^4
		}} =
		$$
		$$
		= \frac{1}{4} \frac {-\partial_1^2\psi \partial_2^3\psi \partial_1^2 \partial_2 \psi + \partial_1^2 \psi (\partial_1 \partial_2^2 \psi)^2
		}
		{\big(\partial_1^2 \psi \partial_2^2 \psi - (\partial_1 \partial_2 \psi)^2 \big)^2
		}+
		$$
		$$
		\frac{1}{4} \frac{	- \partial_1^3\psi \partial_2^2 \psi \partial_1 \partial_2^2 \psi 	+ \partial_2^2 \psi (\partial_2 \partial_1^2 \psi)^2
		}{\big(\partial_1^2 \psi \partial_2^2 \psi - (\partial_1 \partial_2 \psi)^2 \big)^2
		}+
		$$
		$$ \frac{1}{4} \frac{- \partial_1 \partial_2 \psi \partial_1 \partial_2^2 \psi \partial_1^2 \partial_2 \psi + \partial_1^3 \psi \partial_2^3 \psi \partial_1 \partial_2 \psi }{\big(\partial_1^2 \psi \partial_2^2 \psi - (\partial_1 \partial_2 \psi)^2 \big)^2
		} .$$

		We observe that the numerator
		$$ \Big(\partial_1^2 \psi \partial_2^2 \psi - (\partial_1 \partial_2 \psi)^2 \Big)^2 = \Big(g_{11} g_{22} - g_{12}^2 \Big)^2 = \left| \begin{matrix}
			g_{11} & g_{12} \\
			g_{21} & g_{22}
		\end{matrix} \right|^2 \geq 0 \,\,. $$
		Considering the polynomial form at numerator
		$$
		-\partial_1^2\psi \partial_2^3\psi \partial_1^2 \partial_2 \psi + \partial_1^2 \psi (\partial_1 \partial_2^2 \psi)^2 - \partial_1^3\psi \partial_2^2 \psi \partial_1 \partial_2^2 \psi 	+ \partial_2^2 \psi (\partial_2 \partial_1^2 \psi)^2 +
		$$
		$$
		- \partial_1 \partial_2 \psi \partial_1 \partial_2^2 \psi \partial_1^2 \partial_2 \psi + \partial_1^3 \psi \partial_2^3 \psi \partial_1 \partial_2 \psi \,\,,
		$$
		and consider the following equivalences
		$$
		\partial_1 \partial_2 \psi= g_{12} = g_{21} = \partial_2 \partial_1 \psi\,\,,
		$$
		$$
		\partial_1^3 \psi = \partial_1 g_{11}\,\,,
		$$
		$$
		\partial_2^3 \psi = \partial_2 g_{22}\,\,,
		$$
		$$
		\partial_2 \partial_1^2 \psi = \partial_2 g_{11} = \partial_1 g_{12} = \partial_1 g_{21} \,\,,
		$$
		$$
		\partial_1 \partial_2^2 \psi = \partial_1 g_{22} = \partial_2 g_{12} = \partial_2 g_{21} \,\,.
		$$
		Then the polynomial can be rewritten as
		$$
		g_{11} (\partial_2 g_{12} \partial_1 g_{22} - \partial_2 g_{22} \partial_1 g_{12}) + g_{22} (\partial_2 g_{11} \partial_1 g_{12} - \partial_1 g_{11} \partial_2 g_{12})+
		$$
		$$
		-g_{12} (\partial_2 g_{11} \partial_1 g_{22} -\partial_1 g_{11} \partial_2 g_{22}) =
		$$
		(when expressed in terms of determinants)
		$$
		= g_{11} \left|
		\begin{matrix}
			\partial_2 g_{12} & \partial_1 g_{12} \\
			\partial_2 g_{22} & \partial_1 g_{22}
		\end{matrix}\right| + g_{22} \left| \begin{matrix}
			\partial_2 g_{11} & \partial_1 g_{11} \\
			\partial_2 g_{22} & \partial_1 g_{12}
		\end{matrix}\right| - g_{12} \left| \begin{matrix}
			\partial_2 g_{11} & \partial_1 g_{11} \\
			\partial_2 g_{22} & \partial_1 g_{22}
		\end{matrix}\right| =
		$$
		$$
		= \left|
		\begin{matrix}
			g_{11} & \partial_2 g_{11} & \partial_1 g_{11} \\
			g_{12} & \partial_2 g_{12} & \partial_1 g_{12} \\
			g_{22} & \partial_2 g_{22} & \partial_1 g_{22}
		\end{matrix} \right| = \left| \begin{matrix}
			\partial_1^2 \psi & \partial_1^2 \partial_2 \psi & \partial_1^3 \psi \\
			\partial_1 \partial_2 \psi & \partial_1 \partial_2^2 \psi & \partial_1^2 \partial_2 \psi \\
			\partial_2^2 \psi & \partial_2^3 \psi & \partial_1 \partial_2^2 \psi \\
		\end{matrix} \right| \,\,,
		$$
		which implies
		$$
		S (\theta)  = \frac{\left| \begin{matrix}
				g_{11} & \partial_2 g_{11}  & \partial_1 g_{11}  \\
				g_{12}  & \partial_2 g_{12}  & \partial_1 g_{12}  \\
				g_{22}  & \partial_2 g_{22}  & \partial_1 g_{22}
			\end{matrix} \right| }{4 \left| \begin{matrix}
				g_{11} & g_{12}  \\
				g_{21}  & g_{22}
			\end{matrix}\right|^2} \,\,.$$
	\end{proof}

	From the previous formula some interesting  criteria for flatness of exponential families in dimension 2 follow:
	\begin{theorem}
		Observing that the curvature in the case of exponential families can be computed as
		$$
		S (\theta)  = \frac{\left| \begin{matrix}
				g_{11} & \partial_2 g_{11}  & \partial_1 g_{11}  \\
				g_{12}  & \partial_2 g_{12}  & \partial_1 g_{12}  \\
				g_{22}  & \partial_2 g_{22}  & \partial_1 g_{22}
			\end{matrix} \right| }{4 \left| \begin{matrix}
				g_{11} & g_{12}  \\
				g_{21}  & g_{22}
			\end{matrix}\right|^2} \,\,,
		$$
		it can be easily concluded that
		\begin{enumerate}
			\item if some metric tensor element is zero (i.e. $g_{ij}(\theta) = 0 \,\, \forall \theta$), then $S(\theta) =0$ and then the geometry is flat. Hence all metric tensors of any one of the following forms
			$$
			\begin{pmatrix}
				0 & g_{12} \\
				g_{12} & g_{22}
			\end{pmatrix} \,\,,
			\begin{pmatrix}
				g_{11} & g_{12} \\
				g_{12} & 0
			\end{pmatrix}\,\,,
			\begin{pmatrix}
				g_{11} & 0 \\
				0 & g_{22}
			\end{pmatrix}
			$$
			induce a flat geometry.
			\item if two components are linearly dependent (i.e. $\exists \, \lambda \in \mathbb{R}| g_{ij} (\theta)  = \lambda g_{kl} (\theta)\,\, \forall \theta$), then $S(\theta) =0$ and the geometry is again flat. Hence all metric tensors of any one of the following forms
			$$
			\begin{pmatrix}
				g_{11} & \lambda g_{22} \\
				\lambda g_{22} & g_{22}
			\end{pmatrix}\,\,,
			\begin{pmatrix}
				g_{11} & \lambda g_{11} \\
				\lambda g_{11} & g_{22}
			\end{pmatrix}\,\,,
			\begin{pmatrix}
				\lambda g_{12} & g_{12} \\
				g_{12} & g_{22}
			\end{pmatrix}\,\,,
			\begin{pmatrix}
				g_{11} & g_{12} \\
				g_{12} & \lambda g_{12}
			\end{pmatrix}\,\,,
			\begin{pmatrix}
				\lambda g_{22} & g_{12} \\
				g_{12} & g_{22}
			\end{pmatrix}
			$$
			for some $\lambda \in \mathbb{R}$, induce a flat geometry.
			\item if all the metric tensor elements depend on just one of the two parameters, then $S(\theta) =0$ and the geometry is again flat. Hence all metric tensors of any one of the following forms
			$$
			\begin{pmatrix}
				g_{11} (\theta_1) & g_{12} (\theta_1) \\
				g_{12} (\theta_1) & g_{22} (\theta_1)
			\end{pmatrix},
			\begin{pmatrix}
				g_{11} (\theta_2) & g_{12} (\theta_2) \\
				g_{12} (\theta_2) & g_{22} (\theta_2)
			\end{pmatrix}
			$$
			induce a flat geometry.
		\end{enumerate}
	\end{theorem}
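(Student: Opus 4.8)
The plan is to read the numerator of the curvature formula from the previous theorem as a single $3 \times 3$ determinant and to show that it vanishes identically in each of the three listed cases, so that $S \equiv 0$ wherever the metric is nondegenerate. Writing $N(\theta)$ for that numerator,
$$
N = \left| \begin{matrix}
g_{11} & \partial_2 g_{11} & \partial_1 g_{11} \\
g_{12} & \partial_2 g_{12} & \partial_1 g_{12} \\
g_{22} & \partial_2 g_{22} & \partial_1 g_{22}
\end{matrix} \right| \,\,,
$$
the first thing I would point out is the structural reading that makes everything elementary: the three rows of $N$ are indexed by the independent metric components $g_{11}, g_{12}, g_{22}$ (using $g_{21} = g_{12}$), each row having the form $(g_{ab}, \partial_2 g_{ab}, \partial_1 g_{ab})$, while its three columns are, respectively, the vector of components, the vector of $\partial_2$-derivatives, and the vector of $\partial_1$-derivatives. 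Since $S = N / \big(4 (\det g)^2\big)$ and the denominator is strictly positive on the locus where $\det g \neq 0$, it suffices to prove $N \equiv 0$ in each situation. I work under the standing assumption that the metric is nondegenerate, so that $S$ is well defined.

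For criterion (1), if some component $g_{ab}$ vanishes identically then so do both its derivatives, and the entire row of $N$ indexed by $g_{ab}$ is the zero vector; a determinant with a zero row is zero. This covers the three listed forms at once: $g_{11} = 0$ annihilates the first row, $g_{22} = 0$ the third, and $g_{12} = 0$ the second. For criterion (2), suppose two of the independent components satisfy $g_{ab} = \lambda g_{cd}$ for a constant $\lambda$ and all $\theta$. Because differentiation is linear, applying $\partial_1$ and $\partial_2$ to this identity gives $\partial_k g_{ab} = \lambda \, \partial_k g_{cd}$ for $k = 1,2$, so the whole row of $N$ indexed by $g_{ab}$ equals $\lambda$ times the row indexed by $g_{cd}$. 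Two proportional rows force $N = 0$, and each of the five displayed forms is precisely a proportionality between two of $g_{11}, g_{12}, g_{22}$.

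For criterion (3), if every component is a function of $\theta_1$ alone, then $\partial_2 g_{ab} = 0$ for all components, so the middle column of $N$ is identically zero and $N = 0$; symmetrically, if all components depend only on $\theta_2$, the third column (the $\partial_1$-derivatives) vanishes. In every case $N \equiv 0$, hence $S \equiv 0$ and the geometry is flat. I do not expect a genuine analytic obstacle anywhere: once the numerator is recognized as a determinant whose rows are components and whose columns are $(\,\text{value},\ \partial_2,\ \partial_1\,)$, each criterion collapses to an elementary linear-algebra fact — a zero row, a pair of proportional rows, or a zero column. The only point that truly requires care is the nondegeneracy hypothesis $\det g \neq 0$, which must be invoked so that vanishing of the numerator yields $S = 0$ rather than the indeterminate form $0/0$; this is exactly the kind of degenerate, rank-$1$ situation already set aside in the analysis of the location-scale case.
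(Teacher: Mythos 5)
Your proof is correct and takes essentially the same approach as the paper's own one-sentence argument: each criterion annihilates the numerator determinant via a zero row (criterion 1), proportional rows (criterion 2), or a zero column (criterion 3), hence $S(\theta) = 0$. Your explicit invocation of the nondegeneracy hypothesis $\det g \neq 0$, needed so that a vanishing numerator gives $S = 0$ rather than an indeterminate form, is a point of care the paper's proof leaves implicit.
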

	\begin{proof}
		If a row or a column of the determinant at numerator of $S$
		$$
		\left|
		\begin{matrix}
			g_{11} & \partial_2 g_{11}  & \partial_1 g_{11}  \\
			g_{12}  & \partial_2 g_{12}  & \partial_1 g_{12}  \\
			g_{22}  & \partial_2 g_{22}  & \partial_1 g_{22}
		\end{matrix} \right|
		$$
		is equal to zero (or two rows or columns are linearly dependent), then the determinant itself is equal to zero, which implies $S(\theta) =0$.
	\end{proof}

	\subsection{Example of Exponential family}
	We will use an example from Nielsen \cite{Nielsen2013}: suppose $\theta= (\theta_1, \theta_2) = (\frac{\mu}{\sigma^2}, \frac{1}{2 \sigma^2})$. By defining
	$$
	k(x) = 0\,\,,
	$$
	$$
	h(x) = (x,-x^2)\,\,,
	$$
	$$
	\psi(\theta) = \frac{1}{4} \frac{\theta_1^2}{\theta_2} - \frac{1}{2} \log \theta_2 + \frac{1}{2} \log \pi\,\,,
	$$
	we have, after a few calculations, that
	$$
	p(x|\theta) = e^{\theta \cdot h(x) + k(x) -\psi(\theta)} = \frac{1}{\sqrt{2 \pi \sigma^2}} e^{-\frac{(x-\mu)^2}{2 \sigma^2}}
	$$
	i.e. we obtain the normal distribution.

	The metric tensor in this case is
	$$
	g_{ij} (\theta) =
	\begin{pmatrix}
		\frac{1}{2 \theta_2} & -\frac{\theta_1}{2 \theta_2^2} \\
		-\frac{\theta_1}{2 \theta_2^2} & \frac{\theta_1^2}{2 \theta_2^3} + \frac{1}{2 \theta_2^2}
	\end{pmatrix}\,\,.
	$$
	And it can be computed that the curvature is still
	$$
	S(\theta) = -\frac{1}{2}\,\,.
	$$

	We computed the curvature using a CAS by the formula involving determinants:
	$$
	S(\theta) = \frac{\left|
		\begin{matrix}
			g_{11}(\theta)  & \partial_2 g_{11}(\theta)  & \partial_1 g_{11}(\theta)  \\
			g_{12}(\theta)  & \partial_2 g_{12}(\theta)  & \partial_1 g_{12}(\theta)  \\
			g_{22}(\theta)  & \partial_2 g_{22}(\theta)  & \partial_1 g_{22} (\theta)
		\end{matrix} \right| }{4 \left| \begin{matrix}
			g_{11}(\theta)  & g_{12}(\theta)  \\
			g_{21}(\theta)  & g_{22}(\theta)
		\end{matrix}\right|^2}  = - \frac{1}{2}\,\,,
	$$
	obtaining the same result.

	\section{Beta manifold}

	The Beta distributions are of interest in applications because they may represent the probability to observe some probability or relative frequency (having domain in [0,1]): they have been repeatedly used for this purpose in applications \cite{Singh2014,Tiplica2020,Liu2017}. For this reason we will follow \cite{Brigant2019} and will study the asymptotic behavior of the manifold induced by the parametric family  of Beta distributions.
	\begin{definition}
		The Beta family is the parametric family of distributions
		$$
		p(x|\alpha, \beta) = \frac{\Gamma(\alpha+\beta)}{\Gamma(\alpha)\Gamma(\beta)} x^{\alpha-1}(1-x)^{\beta-1}, \forall x \in [0,1]
		$$
		where both parameters $\alpha$ and $\beta$ take values in $(0,+\infty)$ and where $\Gamma(x) = \int_0^{+\infty} {t^{x-1} e^{-t}} dt$ is Euler's Gamma function.
	\end{definition}

	Defining the digamma function
	$$ \psi (x) = \frac{d}{dx} \log \Gamma(x) $$
	and the trigamma function
	$$ \psi'(x) = \frac{d}{dx} \psi(x) = \frac{d^2}{dx^2} \log \Gamma (x) ,$$
	the corresponding metric tensor can be computed as
	$$
	g(\alpha,\beta) =
	\begin{pmatrix}
		\psi'(\alpha)-\psi'(\alpha+\beta) & -\psi'(\alpha+\beta) \\
		-\psi'(\alpha+\beta) & \psi'(\beta) - \psi'(\alpha+\beta)
	\end{pmatrix} \,\,.
	$$

	We can consequently compute the curvature, which is
	$$
	S(\alpha,\beta) =  \frac { \psi''(\alpha) \psi''(\beta) \psi '(\alpha+\beta) - \psi'(\alpha) \psi''(\beta) \psi''(\alpha+\beta) -\psi''(\alpha) \psi'(\beta) \psi''(\alpha+\beta) }{ 4 (\psi'(\alpha) \psi'(\alpha + \beta) + \psi'(\beta) \psi'(\alpha + \beta) -\psi'(\alpha) \psi'(\beta) )} \,\,.
	$$

	It can be proved \cite{Brigant2019} that the curvature is negative and bounded from above. However, we will not study the curvature over the whole manifold, limiting ourselves to reaching the same asymptotic result as in \cite{Brigant2019} with the aid of a CAS.
	\begin{theorem}
		Given the metric tensor
		$$
		g(\alpha,\beta) =
		\begin{pmatrix}
			\psi'(\alpha)-\psi'(\alpha+\beta) & -\psi'(\alpha+\beta) \\
			-\psi'(\alpha+\beta) & \psi'(\beta) - \psi'(\alpha+\beta)
		\end{pmatrix}
		$$
		of the Beta family of distributions, the asymptotic values for the curvature
		$S(\alpha,\beta)$ are
		$$
		\lim_{\alpha \rightarrow +\infty, \beta \rightarrow +\infty} {S(\alpha,\beta)} = - \frac{1}{2} \,\,,
		$$
		$$
		\lim_{\alpha \rightarrow 0, \beta \rightarrow 0} {S(\alpha,\beta)} = 0\,\,, and
		$$
		$$
		\lim_{\alpha \rightarrow +\infty, \beta \rightarrow 0}  {S(\alpha,\beta)} = \lim_{\alpha \rightarrow 0, \beta \rightarrow +\infty}  {S(\alpha,\beta)} = - \frac{1}{4}\,\,.
		$$
	\end{theorem}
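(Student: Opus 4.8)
The plan is to reduce all three statements to substitutions of the standard asymptotic expansions of the trigamma $\psi'$ and tetragamma $\psi''$ functions into a closed-form curvature. The crucial structural observation is that the Beta family is an \emph{exponential} family: writing $\Phi(\alpha,\beta) = \log\Gamma(\alpha) + \log\Gamma(\beta) - \log\Gamma(\alpha+\beta)$, one checks that the stated $g$ is exactly the Hessian $\partial_i\partial_j\Phi$ (for instance $\partial_\alpha^2\Phi = \psi'(\alpha) - \psi'(\alpha+\beta)$). Hence the exponential-family determinant formula proved above applies, and $S(\alpha,\beta) = \det(\cdot)/\big(4(\det g)^2\big)$ becomes an explicit rational expression in the six quantities $\psi'(\alpha),\psi'(\beta),\psi'(\alpha+\beta),\psi''(\alpha),\psi''(\beta),\psi''(\alpha+\beta)$. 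The inputs I would use come from $\psi'(x)=\sum_{n\ge0}(x+n)^{-2}$ and $\psi''(x)=-2\sum_{n\ge0}(x+n)^{-3}$, namely
$$\psi'(x) = \frac{1}{x^2} + \frac{\pi^2}{6} + O(x), \qquad \psi''(x) = -\frac{2}{x^3} - 2\zeta(3) + O(x) \quad (x\to 0^+),$$
$$\psi'(x) = \frac{1}{x} + \frac{1}{2x^2} + \frac{1}{6x^3} + O(x^{-4}), \qquad \psi''(x) = -\frac{1}{x^2} - \frac{1}{x^3} - \frac{1}{2x^4} + O(x^{-5}) \quad (x\to+\infty).$$

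For the two \emph{corner} limits (both parameters to the same endpoint) I would make path-independence manifest by setting $\alpha = aT$, $\beta = bT$ with fixed $a,b>0$ and letting $T\to 0^+$ (resp. $T\to+\infty$), using the $x\to 0^+$ (resp. $x\to+\infty$) expansion for every argument, and then collecting powers of $T$. For the \emph{mixed} limit $\alpha\to+\infty,\ \beta\to 0$ I would instead track the two small scales $1/\alpha$ and $\beta$ separately, using the large-argument expansion at $\alpha$ and at $\alpha+\beta$ (both of which tend to $\infty$) and the small-argument expansion at $\beta$.

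The main obstacle — and the reason the limits are not immediate — is that the leading-order terms cancel identically. Because $\psi'(\alpha+\beta)$ couples the two parameters only through their sum $s=\alpha+\beta$, the leading contributions to both $\det g$ and the determinant numerator organize into multiples of $s-(\alpha+\beta)$, which is zero. One must therefore retain the subleading term of every expansion and verify the resulting cancellations. I expect three qualitatively distinct outcomes. As $\alpha,\beta\to\infty$ one finds $\det g \sim \big(2\alpha\beta(\alpha+\beta)\big)^{-1}$ while the numerator, after cancellation, drops to the matching order, so numerator and $(\det g)^2$ survive at the same reduced order (each $O(T^{-6})$ on the diagonal $a=b$) and their ratio gives the finite value $-\tfrac12$. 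In the mixed limit both the numerator and $(\det g)^2$ reduce to $\sim(\alpha^2\beta^2 s^2)^{-1}$, whose ratio yields $-\tfrac14$. As $\alpha,\beta\to 0$, by contrast, the numerator cancels to strictly lower order than the squared denominator (numerator $O(T^{-6})$ against $(\det g)^2 = O(T^{-8})$), forcing $S = O(T^2)\to 0$.

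The bookkeeping of these cancellations is precisely where the symbolic tools are decisive, consistent with the theme of this paper: I would carry each expansion to one order beyond the cancelling term inside the CAS, simplify the resulting rational function of $a,b$, and confirm that all dependence on $a,b$ drops out — certifying that the approach direction is irrelevant — before reading off $-\tfrac12$, $0$ and $-\tfrac14$. As an independent sanity check I would specialize to the symmetric diagonal $\alpha=\beta$, where the algebra collapses (for example $\det M = (\psi''(\alpha))^2\psi'(2\alpha) - 2\psi'(\alpha)\psi''(\alpha)\psi''(2\alpha)$) and the three limiting values can be recovered by hand.
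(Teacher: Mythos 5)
Your proposal is correct, reaches the right three values, and its description of the cancellation structure checks out (the numerator's leading terms cancel through $s=\alpha+\beta$; $\det g\sim 1/(2\alpha\beta s)$ at the infinite corner and $\det g\sim 1/(\alpha^2\beta)$ in the mixed regime, with the numerator surviving at the matching order; at the zero corner the numerator drops to $O(T^{-6})$ against $(\det g)^2=O(T^{-8})$, forcing $S\to 0$). However, it is organized quite differently from the paper's proof. The paper never invokes its own exponential-family machinery in the Beta section: it substitutes the two trigamma approximations $\psi'(x)\sim 1/x^2$ and $\psi'(x)\sim 1/x+1/(2x^2)$ directly into the metric $g(\alpha,\beta)$, lets the CAS recompute the curvature of the resulting approximate metric, and for the mixed limit resorts to numerical evaluation of ``a very complex polynomial.'' Your route --- recognizing $g$ as the Hessian of $\log\Gamma(\alpha)+\log\Gamma(\beta)-\log\Gamma(\alpha+\beta)$, applying the determinant formula $S=\det(\cdot)/\bigl(4(\det g)^2\bigr)$ from the exponential-family theorem, and substituting expansions of both $\psi'$ and $\psi''$ --- buys three things. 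It ties the Beta section to the paper's Section 3 result; it sidesteps the tacit step of differentiating an asymptotic approximation of the metric (the paper's approximate-then-differentiate order needs justification, which polygamma expansions happen to permit, whereas you expand $\psi''$ independently so no such issue arises); and it turns the mixed limit into an honest two-scale analytic computation instead of a numerical observation. It also exposes a typo in the paper's displayed closed form for $S(\alpha,\beta)$: as printed, its denominator is proportional to $\det g$ rather than $(\det g)^2$, which would give $S\to 0$ at the infinite corner, contradicting the theorem; your version of the formula is the correct one.

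Two caveats. First, your parametrization $\alpha=aT$, $\beta=bT$ for the two corner limits certifies limits only along rays, and ray-wise limits do not imply a joint limit (for instance $\alpha/(\alpha+\beta^2)$ tends to $0$ along every ray as $\alpha,\beta\to+\infty$ but has no joint limit). The fix costs nothing in your framework: keep $\alpha$ and $\beta$ independent, exactly as you already do in the mixed regime --- the cancellations you exploit are algebraic identities in $\alpha$, $\beta$, $s$, and the error terms of the polygamma expansions are uniformly controlled, so the same bookkeeping yields the genuine joint limits. Second, your parenthetical diagonal sanity check carries a sign slip: with the row ordering of the paper's determinant formula, the $3\times 3$ determinant restricted to $\alpha=\beta$ equals $2\psi'(\alpha)\psi''(\alpha)\psi''(2\alpha)-(\psi''(\alpha))^2\psi'(2\alpha)$, the negative of what you wrote; as written, your check would return $+\tfrac12$ at the infinite corner and falsely signal an inconsistency with the main computation.
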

	\begin{proof}
		Following \cite{Brigant2019} we will use two main approximations:
		$$
		\psi'(x) \sim_{x \rightarrow 0} \frac{1}{x^2}
		$$
		and
		$$
		\psi'(x) \sim_{x \rightarrow +\infty} \frac{1}{x}+\frac{1}{2 x^2} \,\,.
		$$\\\\
		Supposing $\alpha \rightarrow +\infty$ and $\beta \rightarrow +\infty$ then
		$$
		g(\alpha,\beta) =
		\begin{pmatrix}
			\psi'(\alpha)-\psi'(\alpha+\beta) & -\psi'(\alpha+\beta) \\
			-\psi'(\alpha+\beta) & \psi'(\beta) - \psi'(\alpha+\beta)
		\end{pmatrix} \sim
		$$
		$$
		\sim
		\begin{pmatrix}
			\frac{1}{\alpha} + \frac{1}{2 \alpha^2} - \frac{1}{\alpha+\beta} -\frac{1}{2 (\alpha+\beta)^2} & - \frac{1}{\alpha+\beta} -\frac{1}{2 (\alpha+\beta)^2}\\
			- \frac{1}{\alpha+\beta} -\frac{1}{2 (\alpha+\beta)^2} & \frac{1}{\beta} + \frac{1}{2 \beta^2} - \frac{1}{\alpha+\beta} -\frac{1}{2 (\alpha+\beta)^2}
		\end{pmatrix}\,\,.
		$$
		In this case we have
		$$
		S(\alpha,\beta) \sim \frac{-0.03125 \alpha^2 - 0.0625 \alpha \beta - 0.0625 \alpha - 0.03125 \beta^2 - 0.0625 \beta}{0.0625 \alpha^2 + 0.125 \alpha \beta + 0.125 \alpha + 0.0625 \beta^2 + 0.125 \beta + 0.0625} \sim -\frac{1}{2}
		$$
		as in \cite{Brigant2019}.\\\\

		Supposing $\alpha \rightarrow 0$ and $\beta \rightarrow 0$ then
		$$
		g(\alpha,\beta) =
		\begin{pmatrix}
			\psi'(\alpha)-\psi'(\alpha+\beta) & -\psi'(\alpha+\beta) \\
			-\psi'(\alpha+\beta) & \psi'(\beta) - \psi'(\alpha+\beta)
		\end{pmatrix}
		\sim
		$$
		$$
		\sim
		\begin{pmatrix}
			\frac{1}{\alpha^2} -\frac{1}{ (\alpha+\beta)^2} &  -\frac{1}{ (\alpha+\beta)^2}\\
			- \frac{1}{ (\alpha+\beta)^2} & \frac{1}{ \beta^2}  -\frac{1}{ (\alpha+\beta)^2}
		\end{pmatrix} \,\,.
		$$
		In this case the curvature
		$$
		S(\alpha,\beta) \sim 0
		$$
		as in \cite{Brigant2019}.\\\\

		Finally, supposing $\alpha \rightarrow 0$ and $\beta \rightarrow +\infty$ (or viceversa) then
		$$
		g(\alpha,\beta) =
		\begin{pmatrix}
			\psi'(\alpha)-\psi'(\alpha+\beta) & -\psi'(\alpha+\beta) \\
			-\psi'(\alpha+\beta) & \psi'(\beta) - \psi'(\alpha+\beta)
		\end{pmatrix}
		\sim
		$$
		$$
		\sim
		\begin{pmatrix}
			\frac{1}{ \alpha^2} - \frac{1}{\alpha+\beta} -\frac{1}{2 (\alpha+\beta)^2} & - \frac{1}{\alpha+\beta} -\frac{1}{2 (\alpha+\beta)^2}\\
			- \frac{1}{\alpha+\beta} -\frac{1}{2 (\alpha+\beta)^2} & \frac{1}{\beta} + \frac{1}{2 \beta^2} - \frac{1}{\alpha+\beta} -\frac{1}{2 (\alpha+\beta)^2}
		\end{pmatrix} \,\,.
		$$
		The result is a very complex polynomial, for which we can numerically compute
		$$
		S(\alpha,\beta) \sim -\frac{1}{4}
		$$
		again as in \cite{Brigant2019}.
	\end{proof}

	It can be thus observed that the Beta family of distributions has negative curvature (the manifold is locally hyperbolic). However, the absolute curvature of the Beta family is locally less than $\frac{1}{2}$ (the absolute curvature of the Gaussian family): while this could appear counterintuitive, it should be remarked that the Beta distribution does not have arbitrary mean and variance in $(-\infty,+\infty)$, since it can be easily shown that the mean of a Beta distribution lies inside the interval $[0,1]$ for every $\alpha,\beta > 0$.

	\section{Conclusions}
	We have shown that several of the most commonly used families of distributions with two parameters are locally hyperbolic, making the tools of hyperbolic geometry the standard in the study of probability manifold in most  application areas. In so doing, we have employed explainable computational techniques, exemplifying the fact that the use of such techniques makes it is possible to substantially boost scientific productivity.

    \bigskip

    \textit{Acknowledgments}: The research reported in this publication was supported by the Distinguished Professor Grant, \'Obuda University Budapest (Tender code: \'OE-KP-1-2022).\\\\

	%------
	% Insert acknowledgments and information
	% regarding funding at the end of the last
	% section, i.e., right before the bibliography.
	%------
	%
	%	\begin{ack}
		%		[We thank X. TODO]
		%	\end{ack}
	%
	%	\begin{funding}
		%		This work was partially supported by~\ldots [TODO]
		%	\end{funding}

	%------
	% Insert the bibliography.
	%------

	\bibliographystyle{plain}
	\bibliography{references}

\begin{thebibliography}{1}

\bibitem{Amari2016}
Shun‐ichi Amari.
\newblock Information geometry and its applications.
\newblock 2016.

\bibitem{Brigant2019}
Alice~Le Brigant and St{\'e}phane Puechmorel.
\newblock The fisher-rao geometry of beta distributions applied to the study of
  canonical moments.
\newblock {\em arXiv: Statistics Theory}, 2019.

\bibitem{Liu2017}
Y.~Liu, Usman~Shittu Chitawa, Guibing Guo, Xingwei Wang, Zhenhua Tan, and
  Shuang Wang.
\newblock A reputation model for aggregating ratings based on beta distribution
  function.
\newblock In {\em International Conference on Crowd Science and Engineering},
  2017.

\bibitem{Nielsen2013}
Frank Nielsen.
\newblock Pattern learning and recognition on statistical manifolds: An
  information-geometric review.
\newblock In {\em International Workshop on Similarity-Based Pattern
  Recognition}, 2013.

\bibitem{Nielsen2021}
Frank Nielsen.
\newblock On information projections between multivariate elliptical and
  location-scale families.
\newblock {\em ArXiv}, abs/2101.03839, 2021.

\bibitem{Singh2014}
B.~P. Singh, P.~S. Pudir, and Sonam Maheshwari.
\newblock Parameter estimation of beta-geometric model with application to
  human fecundability data.
\newblock {\em arXiv: Applications}, 2014.

\bibitem{Strimmer2022}
Korbinian Strimmer.
\newblock Statistical methods: Likelihood, bayes and regression.
\newblock 2022.

\bibitem{Tiplica2020}
Teodor Tiplica, St{\'e}phane Dufreneix, and C.~Legrand.
\newblock A bayesian control chart based on the beta distribution for
  monitoring the two-dimensional gamma index pass rate in the context of
  patient-specific quality assurance.
\newblock {\em Medical physics}, 2020.

\end{thebibliography}

\end{document}